\documentclass[12pt,a4paper,twoside]{article}
\usepackage{amssymb,amsthm}
\usepackage[namelimits,sumlimits,fleqn]{amsmath}
\usepackage{setspace}
\usepackage[a4paper,top=35mm, bottom=30mm, left=20mm, right=20mm]{geometry} 
\usepackage[small, margin=20pt]{caption}
\usepackage{float}\restylefloat{figure}
\usepackage{url,hyperref}

\usepackage{fancyhdr}
\pagestyle{fancy} \fancyhead{} \fancyhead[CO]{Pl\"unnecke's
Inequality} \fancyhead[CE]{G. Petridis} \fancyfoot{}
\fancyfoot[C]{\thepage}

\setlength{\parskip}{1.3 ex plus0.5ex minus 0.2ex}
\setlength{\parindent}{0em}

\allowdisplaybreaks[4]

\title{Pl\"unnecke's Inequality}
\author{Giorgis Petridis}
\date{}

\theoremstyle{plain}
\newtheorem{theorem}{Theorem}[section]
\newtheorem{lemma}[theorem]{Lemma}
\newtheorem{proposition}[theorem]{Proposition}
\newtheorem{corollary}[theorem]{Corollary}

\theoremstyle{definition}

\newtheorem{question}[theorem]{Question}

\newtheorem*{definition}{Definition}
\newtheorem*{acknowledgement}{Acknowledgement}

\theoremstyle{definition}
\newtheorem*{remark}{Remark}

\newcommand{\rst}[1]{\ensuremath{{\mathbin\upharpoonright}%
\raise-.5ex\hbox{$#1$}}} 

\DeclareMathOperator{\im}{Im} 




\setcounter{totalnumber}{2}

\begin{document}

\pagenumbering{arabic}

\setcounter{section}{0}

\bibliographystyle{plain}

\maketitle

\thispagestyle{plain}

\begin{abstract}
Pl\"unnecke's inequality is the standard tool to obtain estimates on the cardinality of sumsets and has many applications in additive combinatorics. We present a new proof. The main novelty is that the proof is completed with no reference to Menger's theorem or Cartesian products of graphs. We also investigate the sharpness of the inequality and show that it can be sharp for arbitrarily long, but not for infinite commutative graphs. A key step in our investigation is the construction of arbitrarily long regular commutative graphs. Lastly we prove a necessary condition for the inequality to be attained.
\end{abstract}

\section[Introduction]{Introduction}
\label{Introduction}

\vspace{1ex}

Pl\"{u}nnecke's inequality is among the most commonly used tools
in additive combinatorics. It was discovered by Helmut
Pl\"{u}nnecke in the late 1960s. The inequality puts bounds on the
magnification ratios of a directed, layered graph $G$, which are
defined as:
$$D_i(G) = \min_{\emptyset\neq Z\subseteq V_0} \frac{|\im^{(i)}(Z)|}{|Z|}.$$
Pl\"unnecke discovered that under some commutativity conditions on
graphs, which have since been known as Pl\"unnecke conditions and
will be defined later, the sequence $D_i^{1/i}(G)$ is decreasing. The directed layered graphs that obey these conditions are called commutative (or Pl\"unnecke) graphs. In particular Pl\"unnecke proved \cite{Plunnecke1970} the following.
\begin{theorem}[Pl\"{u}nnecke]\label{Plunnecke}
Let $G$ be a commutative graph with $D_h(G) = \Delta^h$. Then
$D_i(G)\geq \Delta^i$ for all $1\leq i \leq h$.
\end{theorem}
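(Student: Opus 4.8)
The plan is to follow a ``restriction'' strategy: extract a single combinatorial lemma about minimising subsets of the first layer of a commutative graph, and derive the whole inequality from it, with no recourse to Menger's theorem or to Cartesian products of graphs.

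\emph{Step 1: the key lemma.} First I would prove the following. If $G$ is a commutative graph on layers $V_0,\dots,V_h$ and $\emptyset\neq X\subseteq V_0$ is chosen so that the ratio $|\im^{(1)}(X)|/|X|$ is as small as possible among all nonempty subsets of $V_0$, then, writing $q=D_1(G)$ for this minimum,
\[
  |\im^{(i)}(X)|\ \le\ q^{\,i}\,|X|\qquad(0\le i\le h).
\]
The argument is an induction on $i$, with the defining minimality of $X$ as the base case $i\le 1$. For the inductive step one must show $|\im^{(i+1)}(X)|\le q\,|\im^{(i)}(X)|$, i.e.\ that the image set $\im^{(i)}(X)\subseteq V_i$ does not expand by more than the factor $q$ on passing one layer further along. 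This is where the Pl\"unnecke (commutativity) conditions are essential: they allow one to compare the out-neighbourhood structure on an interior layer with that on $V_0$, so that the minimality defining $q$ on $V_0$ can be brought to bear at layer $V_i$; the comparison itself is run through a secondary induction in which vertices are removed one at a time and the minimality of $q$ is applied to suitable auxiliary subsets of $V_0$. I expect this lemma --- and in particular this ``transfer of minimality'' through the commutativity conditions --- to be the main obstacle; everything downstream is essentially bookkeeping.

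\emph{Step 2: from the lemma to the inequality.} Because $D_i(G)\le|\im^{(i)}(X)|/|X|$ by the definition of the magnification ratios, Step 1 gives at once $D_i(G)\le D_1(G)^{i}$ for every commutative graph $G$ and every $i$. Specialising to $i=h$ and using the hypothesis $D_h(G)=\Delta^h$ yields $D_1(G)\ge\Delta$, which is the case $i=1$ of the theorem. For an index $i$ that divides $h$, I would pass to the ``thinned'' graph $\widetilde G$ whose layers are $V_0,V_i,V_{2i},\dots,V_h$ and in which $V_{ki}$ is joined to $V_{(k+1)i}$ by the length-$i$ paths of $G$; one checks, again from the commutativity conditions of $G$, that $\widetilde G$ is commutative, and since $D_1(\widetilde G)=D_i(G)$ while $D_{h/i}(\widetilde G)=D_h(G)=\Delta^h$, the already-established case $i=1$ applied to $\widetilde G$ gives $\Delta^h\le D_i(G)^{h/i}$, i.e.\ $D_i(G)\ge\Delta^i$. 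The remaining indices are reached by an induction on the length $h$ of the graph: restricting $G$ to the layers $V_0,\dots,V_{h-1}$ and applying the theorem there (valid by the induction) gives $D_j(G)^{1/j}\ge D_{h-1}(G)^{1/(h-1)}$ for every $j\le h-1$, so that everything reduces to the single estimate $D_{h-1}(G)^{h}\ge D_h(G)^{h-1}$ --- and this last one is extracted from the key lemma by a further application, now set up over the length-$(h-1)$ paths of $G$, where the full strength of the minimality transfer is needed. Combining these, $D_i(G)^{1/i}\ge D_h(G)^{1/h}=\Delta$, that is $D_i(G)\ge\Delta^i$, for every $1\le i\le h$, as required.
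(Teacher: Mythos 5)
Your Step 1 is the minimiser lemma from the sumset setting (if $X\subseteq A$ minimises $|X+B|/|X|=q$ then $|X+iB|\le q^i|X|$) transplanted to abstract commutative graphs, and the proof you sketch for it --- ``remove vertices one at a time and apply the minimality of $q$ to suitable auxiliary subsets of $V_0$'' --- is precisely the sumset induction, which runs on structure a Pl\"unnecke graph does not have. In a group one fixes an auxiliary set $C$, removes an element $x$, and compares $X+B+C$ with the translate $X+B+x$ via the subset $X'=\{a\in X: a+B+x\subseteq X+B+C'\}$; the step that closes the induction is the implication $a+x\in X+C'\Rightarrow a+x+B\subseteq X+C'+B$ together with the rewriting $a+x+B=a+B+x$, i.e.\ translation-invariance and commutativity of the ambient group. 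In an abstract commutative graph there is no translation, no auxiliary set $C$, and no way to form the sets $X'$: images of distinct vertices of $X$ interleave arbitrarily rather than being rigid copies of one set, so the ``transfer of minimality'' that you yourself flag as the main obstacle is simply not addressed, and you cannot take for granted that the minimiser statement even holds at this level of generality. This is exactly the point at which the paper takes a different route: it never makes any claim about minimising subsets of $V_0$, but instead weights $V_i$ by $\Delta^{-i}$, shows that a minimum-weight separating set can be pulled down into $V_0\cup V_h$ (Lemmas \ref{Technical Three Layer Pull Down}, \ref{Three Layer Pull Down}, \ref{Pull Down}), deduces Corollary \ref{Min Weight}, and then reads off the inequality from the separating set $Z^c\cup\im^{(i)}(Z)$.

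Even granting Step 1, Step 2 does not close. The commutativity of the thinned graph $\widetilde G$ is asserted, not proved: by Lemma \ref{Hall} the upward condition for a composed edge $uv$ amounts to $|\im^{(-i)}(S)\cap\im^{(i)}(u)|\ge |S|$ for every $S\subseteq\im^{(i)}(v)$, which is itself a Pl\"unnecke-type statement needing an argument (it is immediate for addition graphs, where thinning gives $G_+(A,iB)$, but not for abstract graphs, and it does not follow from the single-edge matchings by any obvious composition). More fatally, your treatment of $i\nmid h$ reduces the theorem to $D_{h-1}(G)^{h}\ge D_h(G)^{h-1}$, and the key lemma cannot deliver this: applied ``over the length-$(h-1)$ paths'', i.e.\ to the level-two graph with layers $V_0,V_{h-1},V_h$, it yields only $D_h\le D_{h-1}^2$, and more generally every application of the lemma to a thinned graph produces an inequality of the form $D_{\mathrm{top}}\le D_{\mathrm{first}}^{\mathrm{level}}$. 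Already for $h=3$ these outputs ($D_3\le D_1^3$, $D_3\le D_2^2$, $D_2\le D_1^2$) are all satisfied by $(D_1,D_2,D_3)=(4,4,16)$, which violates the target $D_2\ge D_3^{2/3}$; so no chain of such deductions can prove the case $i=2$, $h=3$. This non-divisible case is exactly where Ruzsa invokes Cartesian products with auxiliary graphs and where the paper instead builds the exponent $\Delta^{-i}$ into the weights; your proposal has nothing playing that role.
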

The main objective of the paper is to present a new proof of Theorem \ref{Plunnecke}.

Imre Ruzsa has simplified Pl\"unnecke's proof in \cite{Ruzsa1989,Ruzsa1991}. Pl\"unnecke's and Ruzsa's arguments have more similarities than
differences as their backbone is the same. Ruzsa's simplified approach has become the standard way to prove the inequality and we will thus use it as the point of comparison with the present argument. The reader should bear in mind that the same could have been achieved for Pl\"unnecke's proof. 

Ruzsa's argument relies on two key ingredients: Menger's theorem \cite{Menger} and Cartesian
products of graphs. While there are several variations in the literature
\cite{Malouf1995,Nathanson1996,Ruzsa2009,TaoNotes,Tao-Vu2006} they
all follow the original approach closely in first proving  the
special case when $D_h(G)=1$ by applying Menger's theorem and then
deducing the inequality by using Cartesian product of graphs. Here
we present an elementary and more direct proof, which stays close
to Pl\"unneke's and Ruzsa's argument for the special case, but uses neither of the
two ingredients.

Completing the proof with no reference to Menger's or any other
equivalent theorem is noteworthy for two reasons. It shows that
Pl\"{u}nnecke's inequality is a direct consequence of
Pl\"{u}nnecke's conditions and little else. Therefore the bounds
on the cardinality of sumsets that follow from it are also a
direct consequence of commutativity of addition and little else.
The second reason is that by avoiding Menger's theorem we are able
to complete the proof without using Cartesian products of graphs. It was so far not clear whether this very helpful tool was a necessary ingredient and removing it makes the proof more transparent.

Despite its widespread use there has so far been no attempt to
investigate whether Pl\"unnecke's inequality is sharp. We answer
this question for both finite and infinite commutative graphs.
\begin{theorem}\label{Sharp Commutative for Finite}
For all $C\in \mathbb{Q}$ and $h\in \mathbb{Z}^+$ there exists a
commutative graph with $$ D_i(G) = C^i$$ for all $1\leq i \leq
h$.
\end{theorem}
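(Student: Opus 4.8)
The plan is to split the statement into an easy reduction and a genuinely new construction, with all the difficulty concentrated in the latter.

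\emph{Step 1: reduction to a regularity statement.} We may assume $C>0$ and write $C=a/b$ with $a,b\in\mathbb{Z}^{+}$ (the case $C=0$ being handled by the edgeless graph). I would prove the theorem by constructing a commutative graph $G$ on layers $V_0,\dots,V_h$ that is \emph{biregular} in the following sense: there are positive integers $\alpha,\beta$ with $\alpha/\beta=C$ such that every vertex of $V_{i-1}$ has out-degree exactly $\alpha$ and every vertex of $V_i$ has in-degree exactly $\beta$, for $1\le i\le h$, and with $|V_0|=b^{h}$ (forcing $|V_i|=C^{i}|V_0|=a^{i}b^{h-i}\in\mathbb{Z}$). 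Granting such a $G$, the values of the magnification ratios drop out by double counting edges and need no appeal to Theorem \ref{Plunnecke}. Indeed, for any nonempty $Z\subseteq V_{i-1}$ the $\alpha|Z|$ edges leaving $Z$ all land in $\im^{(1)}(Z)\subseteq V_i$, and each vertex of $V_i$ receives at most $\beta$ edges, so $|\im^{(1)}(Z)|\ge(\alpha/\beta)|Z|=C|Z|$; since every vertex has positive out-degree, iterating $\im^{(i)}=\im^{(1)}\circ\im^{(i-1)}$ gives $|\im^{(i)}(Z)|\ge C^{i}|Z|$ for every nonempty $Z\subseteq V_0$. Taking $Z=V_0$ forces $\im^{(i)}(V_0)=V_i$ (both sides have size $a^{i}b^{h-i}$), so $D_i(G)\le|V_i|/|V_0|=C^{i}$; combined with the previous bound, $D_i(G)=C^{i}$ for all $1\le i\le h$.

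\emph{Step 2: constructing a biregular commutative graph.} This is where the work lies, and one should not expect a pure addition graph $V_i=\{i\}\times(B+iA)$ to serve: when $C>1$ it would need $|B+iA|=C^{i}|B|$, i.e.\ exponential growth in $i$, whereas $|B+iA|\le|B|\binom{|A|+i-1}{i}$ grows only polynomially, so a new construction is unavoidable. I would build $G$ by induction on $h$, lengthening a biregular commutative graph of length $h$ that realises $(C^{i})_{0\le i\le h}$ into one of length $h+1$. The base case $h=1$ is the complete bipartite graph between a set of size $b$ and one of size $a$, which is trivially commutative. For the inductive step I would first pass to a uniform blow-up, replacing each vertex by $b$ copies and each edge by a complete bipartite graph: this multiplies all layer sizes by $b$ (so $|V_0|$ becomes $b^{h+1}$), leaves all magnification ratios and the degree ratio $C$ unchanged, and preserves commutativity; then attach a new top layer $V_{h+1}$ with $|V_{h+1}|=C|V_h|$, joined to $V_h$ by a biregular bipartite graph with out/in degree ratio $C$, chosen compatibly with the structure already present in layers $h-1$ and $h$.

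\emph{The main obstacle.} The difficulty is entirely in arranging this attachment so that every new triple of consecutive layers $V_{h-1},V_h,V_{h+1}$ obeys the Pl\"unnecke conditions — in particular the injective, matching-type part of those conditions, which for an edge $u\to v$ demands an injection of the out-neighbourhood of $v$ into that of $u$ that is consistent with the edges, and dually on the in-side. This requirement is in tension with the layer sizes being strictly increasing (the out-neighbourhood of $v$, in the larger layer, must be matched \emph{into} the smaller out-neighbourhood of $u$), and it is exactly this tension that forces $|V_0|$ to be as large as $b^{h}$ and prevents one from shrinking it — which is in turn the reason the construction cannot be pushed to an infinite commutative graph. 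I would resolve the attachment by building the bipartite graph between $V_h$ and $V_{h+1}$ out of the same combinatorial scaffolding already produced by the inductive hypothesis (or, equivalently, by exhibiting an explicit finite abelian-group model at each stage), so that the matchings required by the two Pl\"unnecke conditions can be read off directly; verifying those conditions on every middle triple is then the routine but delicate bookkeeping that I would not carry out in this sketch.
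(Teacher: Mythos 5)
Your Step 1 is correct, and it is essentially the paper's own reduction: a graph that is ``biregular'' in your sense is exactly a regular commutative graph of ratio $C$ in the sense of Section \ref{Regular}, and your double count reproduces the proof of Lemma \ref{regular-sharp}. Your blow-up observation is also fine (it is the Cartesian product with the complete layered graph, hence commutative with unchanged ratios). The problem is that the whole content of Theorem \ref{Sharp Commutative for Finite} lies in your Step 2, and Step 2 as written contains no construction: the attachment of the new layer, which you correctly single out as the difficulty, is deferred to ``the same combinatorial scaffolding already produced by the inductive hypothesis'' and ``routine but delicate bookkeeping''. This is a genuine gap, not a deferrable verification. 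Your inductive hypothesis (a biregular commutative graph of level $h$ with $|V_0|=b^h$) is too weak for any attachment to go through; the induction must carry extra structure, and one needs a new device to create the matchings demanded by Pl\"unnecke's upward condition between the freshly attached layer and the two layers above it.

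Concretely, the paper's device is Lemma \ref{One to k map} combined with Proposition \ref{Multilayer Extension}. After reducing to integer ratio $k$ (an $R_{p/q}$ is the Cartesian product of an $R_p$ with the inverse of an $R_q$ — a reduction you skip, which makes your direct induction at ratio $a/b$ harder still), one does not simply blow up and attach: one first multiplies the level-$h$ graph $R_k$ by a specially constructed $R_1$, the addition graph of $\mathbb{Z}_{2k^2}$ with $B=\{0,1,\dots,k-1,k,2k,\dots,k^2\}$, whose key property is that the image of any bottom vertex admits a one-to-$k$ matching back into the bottom layer. Only then is a new, smaller, bottom layer attached by a complete join, and the upward condition for the new edges is verified via Hall's theorem (Lemma \ref{Hall}) using precisely that one-to-$k$ matching, together with the inductive property — which must be preserved at every step — that the bottom layer is completely joined to the second layer. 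A uniform blow-up by $b$ copies supplies none of this matching structure, and the paper explicitly notes that the simplest level-two example cannot be extended to level three: the extension is the theorem's core, not bookkeeping. Until you exhibit an analogous gadget (or an explicit group model with the required matchings), the proof is incomplete at exactly the point where the theorem is hard.
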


\begin{theorem}\label{Sharp Commutative for Infinite}
Let $G$ be an infinite commutative graph. Then
$$ D_i(G) = C^i$$ can hold if and only if $C=1$.
\end{theorem}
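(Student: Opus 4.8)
The statement is an equivalence, and I would prove the two implications separately. The direction ``$C=1$ can hold'' only asks for an example, and the infinite path does the job: take $V_i=\{v_i\}$, a single vertex, for each $i\ge 0$, with the one edge $v_i\to v_{i+1}$. When every layer is a singleton each instance of the Pl\"unnecke conditions is trivially satisfied, so this is a genuine infinite commutative graph, and $\im^{(i)}(\{v_0\})=\{v_i\}$ gives $D_i(G)=1=1^i$ for every $i$. (One could equally take $V_i=H$ for all $i$, where $H$ is a finite subgroup of an abelian group, with the Cayley-type edges $x\mapsto x+a$, $a\in H$.)

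For the converse I assume $G$ is infinite commutative with $D_i(G)=C^i$ for all $i\ge1$, each layer finite as usual, and I want $C=1$. Theorem~\ref{Plunnecke}, applied to $G$ and to its layer- and vertex-restrictions, will be available throughout; for instance it already gives that $i\mapsto D_i(G)^{1/i}$ is non-increasing, which here just says this sequence is the constant $C$. First I would rule out $C<1$: if $\im^{(i)}(Z)=\emptyset$ for some non-empty $Z\subseteq V_0$ then $D_i(G)=0$, which is impossible, so every vertex of $V_0$ has out-paths of every length; hence $|\im^{(i)}(Z)|\ge 1$ for all non-empty $Z$, so $D_i(G)\ge 1/|V_0|>0$ for all $i$, incompatible with $D_i(G)=C^i\to 0$. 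Thus $C\ge 1$, and it remains to rule out $C>1$.

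Next I would normalise. For each $i$ pick $Z_i\subseteq V_0$ attaining the minimum defining $D_i(G)$; as $V_0$ has finitely many subsets, one set $W$ serves as $Z_i$ for all $i$ in an infinite set $S$. Restricting $G$ to the subgraph generated by $W$ (the vertices reachable from $W$, with all induced edges) keeps the graph infinite and preserves the Pl\"unnecke conditions --- out-neighbourhoods of reachable vertices stay inside the subgraph, and the vertices witnessing the diamond-completion axioms do too --- while it can only raise the magnification ratios; testing against $W$ itself shows the $i$-th ratio is still $C^i$ for $i\in S$. A similar restriction removes all vertices lying on no infinite path. After this I may assume: $G$ is generated by $V_0$, every vertex lies on an infinite path, $n:=|V_0|$, $D_i(G)=C^i$ and $|V_i|\ge C^i n$ for all $i$, and $|V_i|=C^i n$ for every $i\in S$.

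The real work is then to contradict $C>1$. The idea I would pursue is that the Pl\"unnecke conditions are too rigid to support layers of size $\ge C^i n$ at infinitely many levels: commutativity forces the maps $V_i\to V_{i+1}$ to be ``translation-like'' --- this is precisely the abelian structure that lets Pl\"unnecke's inequality be reduced to sumsets --- and iterated sumsets of a bounded set in an abelian group grow polynomially, not exponentially. Concretely, I would use that in the normalised graph the first layer already realises the extremal expansion $D_1(G)=C$ \emph{with equality on all of $V_0$}, and try to push this rigidity forward through the diamond-completion axioms level by level to conclude that the ratios $|V_{i+1}|/|V_i|$ tend to $1$, contradicting their being $\ge C>1$ along $S$. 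Turning ``sharp on every level'' into a usable structural statement, and then extracting sub-exponential growth from it, is where I expect the difficulty to lie; the remaining bookkeeping is routine. Granting this, $C\le 1$, and with the previous step $C=1$.
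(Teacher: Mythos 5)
Your treatment of $C=1$ (the infinite path) and of $C<1$ is fine; the latter is a small variant of the paper's argument and works equally well. The gap is in the case $C>1$, which you yourself flag as the ``real work'' and leave as a heuristic. The appeal to maps $V_i\to V_{i+1}$ being ``translation-like'' and to Pl\"unnecke graphs reducing to abelian sumsets is not something that can be cited: there is no structure theorem identifying commutative graphs with addition graphs, so ``iterated sumsets grow polynomially'' does not transfer. Your proposed route --- propagating the equality $D_1=C$ forward through the commutativity conditions to force $|V_{i+1}|/|V_i|\to 1$ --- is not carried out, and it is not clear it can be; in particular the normalisation step you set up (picking a minimiser $W$ that works along an infinite set $S$ of levels, restricting to the subgraph generated by $W$, etc.) is machinery the final argument never actually uses.

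What is actually needed, and what the paper supplies, is a concrete sub-exponential \emph{upper} bound on the growth of a commutative graph out of a single vertex. This is Lemma~\ref{Growth m=1}: if $|V_0|=1$ and $|V_1|=n$ then $|V_h|\le\binom{n+h-1}{h}$, proved by a clean double induction in $n$ and $h$ that splits $V_h$ according to whether a vertex is reachable through a fixed neighbour $v_1$ of the root. From this, for any $v\in V_0$ one gets $D_i(G)\le|\im^{(i)}(v)|=O\bigl(i^{\,d^+(v)-1}\bigr)$, which is eventually smaller than $C^i$ when $C>1$, finishing the argument without any of the normalisation you introduce. Until you prove some bound of this kind, the $C>1$ half of your proof is missing its key lemma rather than its ``routine bookkeeping''.
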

The extremal graphs for Pl\"unnecke's inequality we present are all regular. It is natural to ask whether this condition is necessary. The final result of this paper is to show that in a way it is: every commutative graph where Pl\"unnecke's inequality is attained must contain a regular commutative subgraph - the exact meaning of this assertion is explained in
Section \ref{Inverse}. 

The remaining sections of the paper as organised as follows. In
Section \ref{Commutative Graphs} we introduce commutative graphs and
the notation used at the remainder of the paper. Section
\ref{Proof} is devoted to the proof of Pl\"unnecke's inequality;
an entirely self-contained argument is found in Sections
\ref{Weighted Commutative Graphs} and \ref{Separating Sets}. In Section \ref{Regular} we
prove Theorems \ref{Sharp Commutative for Finite} and \ref{Sharp
Commutative for Infinite}. Finally in Section \ref{Inverse} we deduce the existence of
the regular subgraph in the case when all the quantities $D_i^{1/i}(G)$ are equal. 
\begin{acknowledgement}
The author would like to thank Tim Gowers for suggesting looking
for a proof of Pl\"unnecke's inequality that does not use
Cartesian products of graphs and for sharing his insight. In
particular, the ideas of working with weighted commutative graphs
was his. He would also like to thank Ben Green, Peter Keevash and Imre Ruzsa for many helpful suggestions. 
\end{acknowledgement}

\section[Commutative Graphs]{Commutative Graphs}
\label{Commutative Graphs}

\vspace{1ex}

The material in this section can be found in any of the standard
references \cite{Nathanson1996,Ruzsa2009,Tao-Vu2006}. The notation
used is however slightly different.

\subsection{Commutative graphs: definition and notation}

\label{Commutative graphs: definition and notation}

$G$ will always be a directed layered graph with edge set $E(G)$
and vertex set $V(G)=V_0\cup\dots\cup V_h$, where the $V_i$ are
the \textit{layers} and $h$ the \textit{level} of the graph. For
any $S\subseteq V_i$ we will write $S^c= V_i \backslash S$ for the
complement of $S$ in $V_i$ and not in $V(G)$. We will furthermore
assume that directed edges exist only between $V_i$ and $V_{i+1}$
and denote this set of edges by $E(V_i,V_{i+1})$.

In order to introduce Pl\"unnecke's conditions we briefly recall
that given an integer $k$ and a bipartite undirected graph
$G(X,Y)$ we say that a \textit{one-to-$k$ matching} exits from $X$
to $Y$ if we can find distinct elements $\{ y^i_x : x\in
X\,\mbox{and}\,1\leq i\leq k\}$ in $Y$ such that $xy^i_x\in E(G)$
for all $x\in X$ and $1\leq i \leq k$. A one-to-one matching is
referred to as a \textit{matching}.

Pl\"{u}nnecke's \textit{upward} condition states that if $uv\in
E(G)$, then there exists a matching from $\im(v)$ to $\im(u)$ (in
the bipartite graph $G(\im(u),\im(v))$ where $xy$ is an undirected
edge if and only if it is a directed edge in $G$). Pl\"{u}nnecke's
\textit{downward} condition states that if $vw\in E(G)$, then then
there exists a matching from $\im^{-1}(v)$ to $\im^{-1}(w)$ (in
the bipartite graph $G(\im^{-1}(v),\im^{-1}(w))$ where $xy$ is an
undirected edge if and only if it is a directed edge in $G$). A \textit{commutative graph} is a directed layered graph that
satisfies both properties. In the literature such graphs are sometimes referred to as \emph{Pl\"unnecke graphs}.

The most typical example is $G_+(A,B)$, the \textit{addition
graph} of two sets $A$ and $B$ in a commutative group.
This is defined as the directed graph whose $i$th layer $V_i$ is
$A+iB$ and a directed edge exists between $x\in V_{i-1}$ and $y\in
V_i$ if and only if $y-x\in B$. When we take $A=\{0\}$ and
$B=\{\gamma_1,\ldots, \gamma_n\}$ where 0 is the identity and
$\gamma_i$ the generators of a free commutative group we call
$G_+(\{0\},\{\gamma_1,\ldots,\gamma_n\})$ the \textit{independent
addition} graph on $n$ generators.

$d^+_H(v) =|\{w : vw \in E(H) \}|$ is the \textit{outgoing degree}
of a vertex $v$ in a subgraph $H$ and $d^+(v)=d^+_G(v)$ is the
outgoing degree of $v$ in $G$. In particular $d^+_H(v)\leq
d^+_G(v)$. Similarly, $d^-_H(v) =|\{u : uv \in E(H) \}|$ is the
\textit{incoming degree} of a vertex $v$ in a subgraph $H$ and
$d^-(v)=d^-_G(v)$ is the incoming degree of $v$ in $G$. In
particular $d^-_H(v)\leq d^-_G(v)$.

A \textit{path} of length $l$ in $G$ is a sequence of vertices
$v_1,\ldots, v_l$ so that $v_{i}v_{i+1}\in E(G)$ for all $1\leq i
\leq l-1$. For a subgraph $H$ of $G$ and $Z\subseteq V(H)$ we thus define
\begin{eqnarray*}
\im^{(i)}_{H}(Z) & = & \{v\in V(H) : \exists \hspace{4 pt}
\mbox{path of length $i$ in $H$ that starts in $Z$ and ends in
$v$\}}
\end{eqnarray*}
and
\begin{eqnarray*} \im^{(-i)}_{H}(Z)& = &\{v\in V(H) : \exists
\hspace{4 pt} \mbox{path of length $i$ in $H$ that starts in $v$
and ends in $Z$} \}.
\end{eqnarray*}
When the subscript is omitted we are taking $H$ to be $G$. When
$i=1$, and consequently $\im^{(1)}(Z)$ is the neighbourhood of $Z$
in $H$, the superscript will be omitted. We can now formally
define magnification ratios. As we have seen the $i$th
magnification ratio of $G$ is defined as
$$ D_i(G) = \min_{\emptyset\neq Z\subseteq V_0} \frac{|\im^{(i)}(Z)|}{|Z|}. $$
We will also write $$\Delta= D_h^{1/h}(G).$$ For $X,Y\subseteq V(G)$ the \emph{channel between $X$ and $Y$} is the subgraph that consists of directed paths starting at $X$ and finishing in $Y$. For $Z\subseteq V_0$ the \textit{channel of $Z$} is the channel between $Z$ and $V_h$. 

A \textit{separating set} in any subgraph
$H$ is a set $S\subseteq V(H)$ that intersects all directed paths of maximum length in $H$.

\subsection{Properties of commutative graphs}

\label{Properties of Commutative Graphs}

The following properties of commutative graphs are standard and
will be used repeatedly.

(1) For $i>j$ and $X\subseteq V_i$, $Y\subseteq V_j$ the channel between $X$ and $Y$ is a commutative graph in its own right. An important special case is the channel of $Z\subseteq V_0$.

(2) For $uv\in E(G)$ Pl\"{u}nnecke's conditions imply $d^+(u)\geq
d^+(v)$ and $d^-(u)\leq d^-(v)$.

(3) For commutative graphs $G$ and $H$ we define their Cartesian
product $G\times H$ as follows.  The $i$th layer of $G \times H$ is the Cartesian
product of the $i$th layer of $G$ with the $i$th layer of $H$. As
for the edges, $(u,x)(v,y)\in E(G\times H)$ if and only if $uv\in
E(G)$ and $xy\in E(H)$. $G\times H$ is a commutative graph with 
$D_i(G\times H) = D_i(G) \, D_i(H)$. Vertex degrees are also
multiplicative as $d^\pm_{G \times H} ((u,x)) = d^\pm_G(u) \,
d^\pm_H(x)$.

(4) We define the inverse $I$ of a commutative graph $G$ as
follows: the $i$th layer of $I$ is the $(h-i+1)$th layer of $G$
and $uv\in E(I)$ if and only if $vu\in E(G)$. One can informally
think of $I$ as the graph consisting of all paths from $V_h$ to
$V_0$. $I$ is always a commutative graph due to the symmetry of
Pl\"{u}nnecke's conditions.

\subsection{Hall's marriage theorem}

\label{Hall's marriage theorem}

We finish this introductory section by stating Hall's marriage
theorem for bipartite graphs $G=G(X,Y)$. For any $x\in X$ we
define its neighborhood by
$$ \Gamma(x) = \{y \in Y : xy\in E(G) \} $$
and the neighborhood of $S\subseteq X$ by
$$ \Gamma(S) = \bigcup_{x\in S} \Gamma(x) .$$

It is clear that in order to have a one-to-$k$ matching from $X$
to $Y$ we need $|\Gamma(S)|\geq k \,|S|$ for all $S\subseteq X$.
Philip Hall proved in 1935 that the converse is also true
\cite{Hall}.
\begin{lemma}[Hall]\label{Hall}
Let $G(X,Y)$ be a bipartite graph. Then a one-to-$k$ matching
exists from $X$ to $Y$ if and only if $$|\Gamma(S)|\geq k \,
|S|~\mbox{for all}~ S \subseteq X .$$
\end{lemma}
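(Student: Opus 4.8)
The ``only if'' direction is already recorded above — any one-to-$k$ matching produces, for each $S\subseteq X$, at least $k|S|$ distinct vertices in $\Gamma(S)$ — so the plan is to prove the ``if'' direction. First I would reduce to the classical case $k=1$ by a cloning trick: build a bipartite graph $G'(X',Y)$ in which each $x\in X$ is replaced by $k$ copies $x^{(1)},\dots,x^{(k)}$, all adjacent to the same neighbourhood $\Gamma(x)$. A one-to-one matching from $X'$ to $Y$ in $G'$ is exactly a one-to-$k$ matching from $X$ to $Y$ in $G$, since the partners of the $k$ copies of $x$ are distinct elements of $\Gamma(x)$. Moreover $G'$ inherits Hall's condition with constant $1$: if $S'\subseteq X'$ and $S\subseteq X$ is the set of vertices having a copy in $S'$, then $\Gamma_{G'}(S')=\Gamma_G(S)$ and $|S'|\le k|S|$, so $|\Gamma_{G'}(S')|\ge k|S|\ge |S'|$. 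Thus it suffices to treat $k=1$.

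For $k=1$ I would argue by induction on $|X|$. If $|X|\le 1$ the hypothesis $|\Gamma(x)|\ge 1$ immediately supplies the matching. For the inductive step I split into two cases. If every nonempty proper subset $S\subsetneq X$ satisfies $|\Gamma(S)|\ge |S|+1$, pick any $x\in X$ and any $y\in\Gamma(x)$, match them, and delete both vertices; each nonempty $S\subseteq X\setminus\{x\}$ loses at most one neighbour, so $|\Gamma(S)|$ drops to at least $|S|$, and the induction hypothesis matches the rest into $Y\setminus\{y\}$. Otherwise there is a nonempty proper $S_0\subsetneq X$ with $|\Gamma(S_0)|=|S_0|$; apply the induction hypothesis on the subgraph spanned by $(S_0,\Gamma(S_0))$ to obtain a perfect matching $M_1$ there, and on the subgraph spanned by $(X\setminus S_0,\,Y\setminus\Gamma(S_0))$, where for $T\subseteq X\setminus S_0$ one has $|\Gamma(T)\setminus\Gamma(S_0)|=|\Gamma(T\cup S_0)|-|S_0|\ge |T\cup S_0|-|S_0|=|T|$, to obtain a matching $M_2$; then $M_1\cup M_2$ matches all of $X$.

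The argument is routine rather than hard, so there is no real obstacle; the only delicate point is the second case of the induction, where one must verify that removing a tight set $S_0$ together with all of $\Gamma(S_0)$ leaves Hall's condition intact — this is exactly where tightness ($|\Gamma(S_0)|=|S_0|$) is combined with the elementary identity $\Gamma(T)\setminus\Gamma(S_0)=\Gamma(T\cup S_0)\setminus\Gamma(S_0)$. One could instead run the induction directly for general $k$, but then the split is at surplus $\ge k$ versus $<k$ and the restriction step requires the tight set to satisfy $|\Gamma(S_0)|=k|S_0|$ exactly, which makes the reduction to $k=1$ the cleaner route.
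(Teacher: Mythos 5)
Your proof is correct, but note that the paper does not actually prove this lemma: it is stated as a classical result and attributed to Philip Hall's 1935 paper, so there is no in-text argument to compare against. What you have written is the standard route: the ``only if'' direction by counting the $k|S|$ distinct partners, the reduction of the one-to-$k$ version to the $k=1$ case by replacing each $x\in X$ with $k$ clones sharing the neighbourhood $\Gamma(x)$ (and checking that Hall's condition with constant $1$ is inherited, since $|S'|\le k|S|\le |\Gamma_{G'}(S')|$), and then the Halmos--Vaughan induction on $|X|$ with the dichotomy between ``every proper nonempty subset has surplus at least one'' and ``there is a tight set $S_0$ with $|\Gamma(S_0)|=|S_0|$''. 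The two delicate points are handled correctly: in the first case deleting a matched pair costs each remaining subset at most one neighbour, and in the second case the identity $|\Gamma(T)\setminus\Gamma(S_0)|=|\Gamma(T\cup S_0)|-|S_0|\ge|T|$ for $T\subseteq X\setminus S_0$ shows the residual graph still satisfies Hall's condition, while the two partial matchings have disjoint images and so combine. So the proposal is a complete and valid proof of the lemma that the paper simply cites; an equally common alternative, which you mention implicitly, is to run the induction directly with the parameter $k$, but your cloning reduction is the cleaner way to obtain the one-to-$k$ statement from the classical one.
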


\section[Proof of Pl\"{u}nnecke's Inequality]{Proof of Pl\"{u}nnecke's Inequality}
\label{Proof}

\vspace{1ex}

We begin our examination of Pl\"{u}nnecke's inequality with a new
proof of Theorem \ref{Plunnecke}. The proof is inspired by the
work of Ruzsa that appeared in \cite{Ruzsa1989, Ruzsa1991} and in
particular by an exposition of Ruzsa's argument due to Terence Tao
\cite{TaoNotes}. However, there are crucial differences, as
Menger's theorem and Cartesian products of graphs are not needed.

\subsection{Outline of the Pl\"unnecke-Ruzsa proof}

\label{Outline of Ruzsa's Proof}

The traditional proof of Theorem \ref{Plunnecke} can be split in two distinct parts. The first is to
establish the special case when $\Delta= 1$. The key is the relation between magnification ratios and separating sets in the graph. By applying Menger's theorem Pl\"unnecke proved
the following powerful result:
\begin{proposition}[Plunnecke]\label{Ruzsa}
Let $G$ be a commutative graph with $D_h(G)\geq 1$. Then there
are $|V_0|$ vertex disjoint paths from $V_0$ to $V_h$ in $G$ and
therefore $D_i(G)\geq 1$ for all $i$.
\end{proposition}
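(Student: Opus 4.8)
The plan is to show directly, without invoking Menger's theorem, that a commutative graph $G$ with $D_h(G)\geq 1$ admits $|V_0|$ vertex-disjoint paths from $V_0$ to $V_h$. The natural route is an induction on the level $h$, with the following structure. If $h=1$ the hypothesis says $|\im(Z)|\geq|Z|$ for every $\emptyset\neq Z\subseteq V_0$, which by Hall's marriage theorem (Lemma \ref{Hall}, with $k=1$) produces a matching from $V_0$ into $V_1$; these edges are the required disjoint paths. For the inductive step I would take a shortest first layer: pick a set $S\subseteq V_1$ that is a separating set for the top $h-1$ layers in a minimal sense, or more simply work with $\im(V_0)\subseteq V_1$ and the channel between $V_0$ and this set. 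The key point to extract is that in a commutative graph both the bottom edge-layer and the channel sitting above any intermediate layer again satisfy a Hall-type expansion condition, so one can splice a matching on the first edge-layer to disjoint paths obtained inductively in a level-$(h-1)$ subgraph.

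The technical heart is verifying that the relevant intermediate graph still has magnification ratio at least $1$. Here I would argue as follows. Let $Z\subseteq V_0$ be a minimiser realising $D_h(G)\geq 1$, i.e.\ with $|\im^{(h)}(Z)|\geq|Z|$ and this ratio minimal; restrict attention to the channel $G'$ of $Z$, which by property (1) of Section \ref{Properties of Commutative Graphs} is itself a commutative graph. Using Pl\"unnecke's upward condition I would show that passing from layer $V_0$ to layer $V_1$ does not destroy the inequality: for every $\emptyset\neq W\subseteq \im(Z)$ one has $|\im^{(h-1)}(W)|\geq |W|$, because if it failed for some $W$ one could pull $W$ back along the matchings guaranteed by the upward condition to a subset of $Z$ with too small an $h$-image, contradicting minimality of $Z$. (Symmetrically, the downward condition handles the dual situation and is what makes the induction genuinely go through when one peels layers from the top instead.) Granting this, the graph obtained from $G'$ by deleting $V_0$ is a commutative graph of level $h-1$ with $D_{h-1}\geq 1$, so by induction it contains $|\im(Z)|\geq|Z|$ vertex-disjoint paths from $V_1$ to $V_h$; combining these with a matching from $Z$ into $\im(Z)$ (Hall again) and then observing $Z$ was arbitrary up to the minimiser, one assembles $|V_0|$ disjoint paths through the whole graph. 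Finally $D_i(G)\geq 1$ for every $i$ is immediate: disjoint paths from $V_0$ to $V_h$ restrict to an injection $Z\hookrightarrow\im^{(i)}(Z)$ for every $Z$ and every $i\leq h$.

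The main obstacle I anticipate is the bookkeeping in the inductive step: one must be careful that the "push up one layer" operation preserves the minimality/expansion hypothesis \emph{simultaneously} with producing a subgraph to which induction applies, and that the disjoint paths found below the cut and above the cut can be concatenated without collisions. The cleanest way to manage this is probably to prove a slightly stronger statement by induction — not merely the existence of $|V_0|$ disjoint paths, but the existence, for every $\emptyset\neq Z\subseteq V_0$, of $|Z|$ disjoint paths from $Z$ to $V_h$ — since that stronger form survives restriction to channels and to sublayers. With that formulation, the upward (respectively downward) Pl\"unnecke condition is exactly the tool that upgrades a Hall matching on one edge-layer to a family of disjoint paths spanning an extra layer, and no reference to Menger or to Cartesian products is required.
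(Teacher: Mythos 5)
The paper itself does not reprove Proposition~\ref{Ruzsa}; it cites it as Pl\"unnecke's result via Menger's theorem, and the paper's new argument (weighted separating sets) is designed precisely to \emph{avoid} this proposition when proving Theorem~\ref{Plunnecke}. Your proposal tries to give a Menger-free inductive proof of the proposition, which would be a genuinely new contribution, but the inductive step has a concrete gap.

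The claim your induction rests on --- that after peeling off $V_0$ the truncated channel still has $h-1$st magnification ratio at least one, i.e.\ $|\im^{(h-1)}(W)|\geq|W|$ for every $\emptyset\neq W\subseteq \im(Z)$ --- is false. Take the level-two commutative graph with $V_0=\{a\}$, $V_1=\{b,c\}$, $V_2=\{d\}$ and all four edges $ab,ac,bd,cd$. Both Pl\"unnecke conditions hold, and $D_2(G)=1$ because $|\im^{(2)}(\{a\})|=1=|V_0|$. But $\im(\{a\})=\{b,c\}$ has $|\im(\{b,c\})|=1<2$; the graph above $V_0$ has $D_1=1/2$, and there do not exist $|\im(Z)|=2$ disjoint paths from $V_1$ to $V_2$. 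So the induction cannot be fed. Your argument for the claim --- pull a bad $W$ back along matchings to a subset of $Z$ with too small an $h$-image, contradicting minimality --- does not yield a contradiction, because the pull-back may collapse $W$ onto a strictly smaller subset of $Z$; in the example $\{b,c\}$ collapses to $\{a\}$, which satisfies the bound with room to spare. The cleaner restatement you propose at the end (disjoint paths from \emph{every} $Z$) is equivalent to the original conclusion and exhibits the same failure in the example. The underlying obstruction is exactly what the paper points to in Section~\ref{Outline of Ruzsa's Proof}: as soon as $|\im(Z)|$ can strictly exceed $|Z|$, layer-by-layer expansion is not inherited, and the correct invariant to track is a minimum (weighted) separating set rather than a layer. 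Establishing the vertex-disjoint-paths conclusion without Menger or an equivalent max-flow/min-cut duality appears to require more than an induction on the level through $V_1$; the paper's pull-down technique gives the numerical inequality $D_i(G)\geq 1$ but not, by itself, the disjoint paths.
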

The duality between separating sets and vertex disjoint paths is
exploited fully. This poses a serious obstacle when trying to
extend this idea for general values of $\Delta$ as Menger's
theorem is no longer useful. Even for integer $\Delta \neq 1$
there is an example which shows that proving the following natural
and plausible generalisation
\begin{question}\label{Ruzsa generalisation conjecture}
Suppose that $D_h(G) \geq k^h$ for some integer $k$. Then there
are $|V_0|$ vertex disjoint trees each having at least $k^i$
vertices in $V_i$.
\end{question}
would require ideas beyond those found in this paper.

The second part of the proof is to overcome this obstacle by
deducing the general case from Proposition \ref{Ruzsa}. Ruzsa achieved this using the
multiplicativity of magnification ratios. The quickest way to do
this is by using some graphs we will introduce in Section
\ref{Regular} (c.f. Section \ref{Regular}).
For any rational $q \leq \Delta$ there is a commutative graph
$R_q$ with $D_i(R_q)=q^{-i}$ for all $i=1,\dots,h$. We know that
$D_h(G\times R_q) = D_h(G)\,D_h(R_q) = (\Delta q^{-1})^h\geq 1$
and so $$ 1 \leq D_i(G\times R_q) = D_i(G)\,D_i(R_q) = D_i(G)
q^{-i} $$ This implies that $D_i(G)\geq q^i$ for all rationals
$q\leq \Delta$ and hence that $D_i(G)\geq \Delta^i$. For the
reader's benefit we will note that the standard deduction uses
independent addition graphs instead. In this context it is
mandatory to take the product of $r$ copies of $G$ with suitably
chosen independent addition graphs and then let $r\rightarrow
\infty$.

Ruzsa's approach is elegant, but leaves one question unanswered:
what is the precise role of Cartesian products in the proof and
how does it allow us to use Proposition \ref{Ruzsa} in such a
simple way when proving a generalisation is tricky? A simpleminded
approach is to see what the existence of the paths in $G\times
R_q$ implies about $G$, but this yields a mere reformulation of
Pl\"{u}nnecke's inequality. A more refined approach suggested by
Tim Gowers is to work in a weighted version of $G$. In this setting Menger's theorem could be replaced by the Max Flow - Min Cut theorem. 

In fact Theorem \ref{Plunnecke} will be
proved by focusing on the minimum cut in (the network generated
by) $G$ without using any properties of a maximal flow. In doing
so we will mirror Pl\"unnecke's proof of Proposition \ref{Ruzsa}
closely, but will introduce a further ingredient in Section
\ref{Separating Sets} that allows us to apply his argument to all
$\Delta$.

\subsection{Weighted Commutative Graphs}

\label{Weighted Commutative Graphs}

The proof of Proposition \ref{Ruzsa} is built around the fact that
when $\Delta =1$ there is a very natural relation between
separating sets in $G$ and magnification ratios. In order to make
use of this observation for general $\Delta$ we need to work with
weighted commutative graphs; i.e. a commutative graph with a
weight function
$$ w: V(G) \mapsto \mathbb{R}^+ .$$
We will eventually give every vertex in $V_i$ weight $\Delta^{-i}$.
The reasons behind this choice will become apparent shortly, but
different weights may be more suitable in other applications. It
should be noted that this can be thought of as an alternative to
taking a Cartesian product of $G$ with the $R_q$. We also need a
notion of the weight of a set of vertices in $G$ and so we define
the weight of any set $S\subseteq V(G)$ as
$$ w(S) = \sum_{v\in S} w(v). $$
In what follows this will equal
$$ \sum_{i=0}^h |S\cap V_i| \,C^{-i}$$
for a positive constant $C$. The heart of the proof of Proposition
\ref{Ruzsa} is to ``pull down'' any minimum separating set to one
that lies entirely in $V_0\cup V_h$. Pl\"unnecke achieved this by applying Pl\"{u}nnecke's
conditions to the paths given by Menger's theorem. The same can be done for weighted
commutative graphs and in fact without any reference to Menger's
or some other equivalent theorem. The following result
demonstrates how powerful Pl\"{u}nnecke's conditions are.
\begin{lemma}\label{Pull Down}
Let $C$ be a positive real and $G$ a weighted commutative graph with
vertex set $V_0\cup V_1\cup\dots\cup V_h$ and $w(v)=C^{-i}$ for
all $v\in V_i$. A separating set of minimum weight that lies entirely in $V_0\cup V_h$ exists.
\end{lemma}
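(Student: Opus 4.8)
The plan is to start with an arbitrary minimum-weight separating set $S$ and repeatedly apply Pl\"unnecke's conditions to push the ``interior'' part of $S$ either all the way down to $V_0$ or all the way up to $V_h$, never increasing the weight. Concretely, write $S_i = S\cap V_i$ and let $1\le j\le h-1$ be the smallest index with $S_j\neq\emptyset$ (if none exists we are done). I would try to replace $S_j$ by a subset of $V_{j-1}$, decreasing $j$; iterating this drives the lowest interior layer down to $V_0$, and a symmetric argument (using the downward condition, or equivalently the inverse graph, property (4)) drives the highest interior layer up to $V_h$.

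The key local move is this. Let $T = S_j$ and consider $\im^{(-1)}(T)\subseteq V_{j-1}$, the set of vertices in $V_{j-1}$ with an edge into $T$. First I would argue that $S' = (S\setminus T)\cup \im^{(-1)}(T)$ is still a separating set: any maximum-length path hitting a vertex $v\in T$ enters $V_j$ from $V_{j-1}$ at some vertex in $\im^{(-1)}(T)$, so it is still caught (paths not passing through $V_j$ at $T$ are unaffected). The weight cost of the swap is $C^{-(j-1)}|\im^{(-1)}(T)| - C^{-j}|T|$, so I need $|\im^{(-1)}(T)| \le C\,|T|$ — which need not hold. This is where Pl\"unnecke's conditions enter: the downward condition gives, for each edge $uv$ with $v\in T$, a matching from $\im^{-1}(v)$ into $\im^{-1}(w)$ for neighbours $w$, and more usefully, applying Hall's theorem (Lemma \ref{Hall}) to the bipartite graph between $\im^{(-1)}(T)$ and $T$ yields a way to find, inside $\im^{(-1)}(T)$, a subset $T'$ with $|T'|\le |T|$ whose neighbourhood already covers enough of $T$ — so that replacing $T$ by $T'\subseteq V_{j-1}$ keeps the set separating while paying at most $C^{-(j-1)}|T'|\le C^{-(j-1)}|T|$, which may still be larger than $C^{-j}|T|$ when $C<1$. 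To handle that, rather than pushing down, one pushes in the favourable direction: for $C\ge 1$ push everything down, and for $C\le 1$ push everything up (using the upward condition, whose matchings from $\im(v)$ to $\im(u)$ let the same Hall argument run upward). By symmetry it suffices to treat one case.

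More carefully, the right statement of the local move is: if $T=S_j$ with $1\le j$, then there is a set $T'\subseteq \im^{(-1)}(T)\subseteq V_{j-1}$ with $|T'|\le |T|$ such that $(S\setminus T)\cup T'$ separates. This follows by applying Hall's theorem in the bipartite graph $B$ with parts $\im^{(-1)}(T)$ and $T$ and edges inherited from $G$: Pl\"unnecke's downward condition implies that from every subset of $T$ there is a matching saturating it into $\im^{(-1)}(T)$ (one expands vertex by vertex along edges, using that each new vertex $v$ of $T$ has $\im^{-1}(v)$ matched into the relevant $\im^{-1}(w)$), hence by König/Hall $B$ has a matching saturating $T$; picking $T'$ to be the $T$-side-saturating matching's image in $\im^{(-1)}(T)$ gives $|T'|=|T|$ and the property that every path through $T$ is still intercepted. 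Then $(S\setminus T)\cup T'$ has weight $w(S) - C^{-j}|T| + C^{-(j-1)}|T| = w(S) + (C-1)C^{-j}|T|$, which is $\le w(S)$ precisely when $C\le 1$. For $C>1$ the same construction applied to the inverse graph $I$ (property (4)), whose weight function is $C$-to-the-reversed-layer, plays the role of pushing interior vertices toward $V_h$ at no cost. Iterating the favourable move at most $h$ times empties all interior layers, and since weight never increases and $S$ was of minimum weight, the resulting set in $V_0\cup V_h$ is again of minimum weight.

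The main obstacle I anticipate is making the ``$S'$ still separates'' claim airtight when $T'$ is a proper subset of $\im^{(-1)}(T)$: one must be sure that a maximum-length path meeting $T$ actually meets $T'$, which is why the matching (rather than just taking all of $\im^{(-1)}(T)$) is essential — the Hall/König matching is what guarantees, via a short augmenting-path or counting argument, that no maximum path slips through. A secondary subtlety is bookkeeping about ``maximum-length'' versus ``length-$h$'' paths: after pulling vertices into $V_0$ the relevant maximal paths are still the source-to-sink ones, so the separating property is stable, but this should be checked. Modulo these points the argument is a direct weighted transcription of Pl\"unnecke's ``pull-down,'' with Hall's theorem standing in for Menger's.
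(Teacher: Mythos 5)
There is a genuine gap, and it sits exactly at your ``key local move.'' First, the matching you invoke does not exist in general: Pl\"unnecke's downward condition gives matchings from $\im^{-1}(v)$ to $\im^{-1}(w)$ for an edge $vw$, but it does \emph{not} give Hall's condition $|\im^{-1}(A)|\geq |A|$ for all $A\subseteq T$ with $T=S_j\subseteq V_j$; in a commutative graph a whole layer can contract onto a single vertex below (e.g.\ the addition graph with $|A|=1$, where $\im^{-1}(V_1)=V_0$ is a singleton), so no matching saturating $T$ into $\im^{-1}(T)$ need exist, and $|T'|\le|T|$ is precisely the kind of inequality the lemma is supposed to produce, not a free input. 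Second, even if a perfect matching existed, replacing $T$ by its matched image $T'$ does not preserve the separating property: a maximum-length path may enter a vertex $t\in T$ through a predecessor other than $t$'s matched partner and then avoid $(S\setminus T)\cup T'$ entirely. To keep separation you are forced to take essentially \emph{all} predecessors of $T$ reachable by paths avoiding $S_0,\dots,S_{j-1}$ (this is the set $U_0$ in the paper's proof), and bounding the weight of that set is the whole difficulty. Your case split ($C\le 1$ push down, $C>1$ push up) inherits the same defect in both directions, so it cannot be repaired by symmetry.

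The paper's mechanism is different and is worth internalising: it works with the \emph{maximal} nonempty interior layer $S_j$, restricts to the level-two channel $H$ whose middle layer is exactly $S_j$ (the top and bottom layers being the vertices reachable while avoiding the rest of $S$), and then uses the \emph{minimality} of $w(S)$ --- which your local move never invokes --- to get the two-sided inequalities $|\im(S')|\ge C|S'|$ and $|\im^{-1}(S')|\ge C^{-1}|S'|$ for every $S'\subseteq S_j$. The double edge-count of Lemma \ref{Technical Three Layer Pull Down} then forces equality, so the pulled-down set $U_0$ has \emph{exactly} the same weight as $S_j$ (Lemma \ref{Three Layer Pull Down}); no favourable-direction argument or dependence on whether $C\le 1$ is needed. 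Hall's theorem plays no role at this point in the paper (it is only used elsewhere, e.g.\ in Proposition \ref{Multilayer Extension}), and replacing Menger by Hall in the way you propose does not recover the statement.
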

Proving this lemma will be the main objective of the next
subsection. For the time being let us quickly see how to deduce
Theorem \ref{Plunnecke} from it.
\begin{corollary}\label{Min Weight}
Let $G$ a weighted commutative graph with vertex set $V_0\cup
V_1\cup\dots\cup V_h$ and $w(v)=\Delta ^{-i} = D_h(G)^{-i/h}$ for
all $v\in V_i$. The weight of any minimal separating set is
$|V_0|$.
\end{corollary}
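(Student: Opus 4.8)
The plan is to derive the corollary from Lemma~\ref{Pull Down} applied with $C=\Delta$, the rest being a short arithmetic manipulation. First I would record that the value $|V_0|$ is attained: in this graph every directed path of maximal length runs from $V_0$ to $V_h$, so the whole layer $V_0$ is a separating set, and since $w(v)=\Delta^{-0}=1$ for each $v\in V_0$ its weight is exactly $|V_0|$. Hence the minimum weight of a separating set is at most $|V_0|$, and what remains is the reverse inequality; as the statement concerns a separating set of minimum weight, this amounts to showing that the minimum weight is at least $|V_0|$.

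For this I would invoke Lemma~\ref{Pull Down}, which with $C=\Delta$ supplies a separating set $S$ of minimum weight contained in $V_0\cup V_h$. Write $S_0=S\cap V_0$ and $S_h=S\cap V_h$, so that $w(S)=|S_0|+\Delta^{-h}|S_h|$, and set $Z=V_0\setminus S_0$. If $Z=\emptyset$ then $w(S)\ge|S_0|=|V_0|$ and there is nothing more to prove, so assume $Z\neq\emptyset$. A directed path of maximal length meets $V_0$ only at its initial vertex and $V_h$ only at its terminal vertex; hence any such path that starts at a vertex of $Z$ avoids $S_0$ altogether and, being met by the separating set $S$, must terminate in $S_h$. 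Every vertex of $\im^{(h)}(Z)$ is the terminal vertex of such a path, so $\im^{(h)}(Z)\subseteq S_h$.

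It now suffices to combine this containment with the definition of $D_h(G)$. Since $Z\neq\emptyset$ we have $|S_h|\ge|\im^{(h)}(Z)|\ge D_h(G)\,|Z|=\Delta^h\bigl(|V_0|-|S_0|\bigr)$, and therefore
\[
 w(S)=|S_0|+\Delta^{-h}|S_h|\ \ge\ |S_0|+\bigl(|V_0|-|S_0|\bigr)\ =\ |V_0|.
\]
As $S$ was chosen of minimum weight among separating sets, every separating set has weight at least $|V_0|$, and together with the first paragraph this gives the claim. The substantive ingredient here is Lemma~\ref{Pull Down}, whose proof is the business of the next subsection; granting it, there is really no obstacle, and the only point requiring a little care above is the remark that a maximal path touches $V_0$ and $V_h$ exactly once each — which rests on the (implicit) hypothesis $D_h(G)>0$ that makes the paths of maximal length precisely the $V_0$-to-$V_h$ paths.
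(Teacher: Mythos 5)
Your proposal is correct and follows essentially the same route as the paper: apply Lemma~\ref{Pull Down} to pull the minimum-weight separating set into $V_0\cup V_h$, observe $\im^{(h)}(S_0^c)\subseteq S_h$, and use $|S_h|\geq D_h(G)\,|S_0^c|$ together with $w(V_0)=|V_0|$ to conclude. Your extra remarks (the $Z=\emptyset$ case and the observation that maximum-length paths run from $V_0$ to $V_h$) merely make explicit what the paper leaves implicit.
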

\begin{proof}
By applying Lemma \ref{Pull Down} we can assume that $S_0\cup S_h$ is a
separating set of minimum weight with $S_i\subseteq V_i$. We know that
$\im^{(h)}(S^c_0)\subseteq S_h$ and so $|S_h| \geq
|\im^{(h)}(S^c_0)|\geq D_h(G) |S_0^c|$. This in turn implies
$w(S)= w(S_0) + w(S_h) = |S_0| + |S_h| \,D_h^{-1}(G) \geq |S_0|+
|S_0^c| = |V_0|$. On the other hand $V_0$ is a separating set and
hence $w(S)=|V_0|$ for any separating set of minimum weight.
\end{proof}

Pl\"{u}nnecke's inequality follows in a straightforward manner:

\begin{proof}[Proof of Theorem \ref{Plunnecke}]
We consider any $Z\subseteq V_0$ in the weighted version of $G$,
where each $v\in V_i$ has weight $\Delta^{-i}$. $Z^c \cup
\im^{(i)}(Z)$ is a separating set and thus
$$ |V_0| \leq w(Z^c \cup \im^{(i)}(Z)) = w(Z^c) + w(\im^{(i)}(Z)) =
|V_0| - |Z| + |\im^{(i)}(Z)| \,\Delta^{-i}$$ I.e.
$|\im^{(i)}(Z)|\geq\Delta^i |Z|$. Taking the minimum over all
$Z\subseteq V_0$ gives the lower bound on $D_i(G)$.
\end{proof}

\subsection{Separating sets on weighted commutative graphs}

\label{Separating Sets}

We now turn to the proof of Lemma \ref{Pull Down}.
The key is to make optimal use of separating sets of minimal
weight. Instead of using vertex disjoint paths we rely on
the following elementary observation. Suppose that $S$ is a separating
set of minimum weight. Then for any $Z\subseteq S$
$$w(\im(Z))\geq w(Z)~~\mathrm{and}~~w(\im^{-1}(Z))\geq w(Z).$$
We begin with establishing the simplest case of Lemma \ref{Pull Down} when
$h=2$ and the middle layer is the separating set. We will need to
apply the following in the coming section and therefore state it
in slightly more general terms.
\begin{lemma}\label{Technical Three Layer Pull Down}
Let $C$ be a positive real and $H$ be a commutative graph of level two
with vertex set $U_0\cup U_1\cup U_2$. Suppose that for all
$S\subseteq U_1$
$$ |\im(S)|\geq C |S| ~~\mathrm{and}~~ |\im^{-1}(S)|\geq C^{-1} |S|. $$
If $X_i$ is the set of vertices in $U_1$ that have incoming degree
equal to $i$ and $Y_i$ is set of vertices in $U_2$ that have
incoming degree equal to $i$, then
$$ C |X_i| = |Y_i|.$$
Similarly if $X^\prime_i$ is the set of vertices in $U_1$ that
have outgoing degree equal to $i$ and $Y^\prime_i$ is the set of
vertices in $U_0$ that have outgoing degree equal to $i$, then
$$ C^{-1} |X^\prime_i| = |Y^\prime_i|.$$
\end{lemma}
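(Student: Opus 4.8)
The plan is to work with the weighted structure directly and exploit the hypothesis that the relevant expansion inequalities hold \emph{with equality on the whole of $U_1$}. First I would record the two sided bound in aggregate form: applying $|\im(S)|\geq C|S|$ with $S=U_1$ gives $|U_2|=|\im(U_1)|\geq C|U_1|$, and applying $|\im^{-1}(S)|\geq C^{-1}|S|$ with $S=U_2$ (legitimate since every vertex of $U_2$ has an in-neighbour in a channel graph, or after first discarding isolated vertices) gives $|U_1|=|\im^{-1}(U_2)|\geq C^{-1}|U_2|$. Together these force $|U_2|=C|U_1|$ and, more importantly, \emph{equality throughout}: there is no room for slack anywhere, so in particular $|\im(S)|=C|S|$ must fail to be strict only in a controlled way. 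The cleaner route is to count edges two ways: $\sum_i i\,|Y_i| = |E(U_1,U_2)| = \sum_v d^+(v)$ summed over $v\in U_1$, while $\sum_i |Y_i| = |U_2| = C|U_1| = C\sum_i|X_i|$.

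The heart of the argument is a \emph{double-counting sharpened by the degree monotonicity} coming from Pl\"unnecke's conditions (property (2) in Section~\ref{Properties of Commutative Graphs}) and by Hall's theorem (Lemma~\ref{Hall}). Here is the mechanism I would use. For each threshold $k$, let $X_{\geq k}$ be the set of $v\in U_1$ with $d^-(v)\geq k$ and $Y_{\geq k}$ the set of $v\in U_2$ with $d^-(v)\geq k$. The downward Pl\"unnecke condition, via Hall, should give a one-to-$k$ matching from $X_{\geq k}$ \emph{into} $U_2$ using only edges landing in high-incoming-degree vertices — more carefully, one shows $\im(X_{\geq k})\subseteq Y_{\geq k}$ is false in general, so instead I would argue: the subgraph on $U_1\cup U_2$ restricted to $X_{\geq k}$ and its image has every vertex of $X_{\geq k}$ of in-degree $\geq k$, and by the upward condition applied along edges one transfers this to conclude $|Y_{\geq k}|\geq C\,|X_{\geq k}|$ for every $k$. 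Symmetrically, reversing roles (using that the inverse graph is commutative, property (4)), $|X_{\geq k}|\geq C^{-1}|Y_{\geq k}|$, hence $|Y_{\geq k}| = C\,|X_{\geq k}|$ for all $k\geq 1$. Taking successive differences $|Y_i| = |Y_{\geq i}| - |Y_{\geq i+1}| = C(|X_{\geq i}| - |X_{\geq i+1}|) = C|X_i|$ yields the first claim. The second claim, $C^{-1}|X'_i| = |Y'_i|$, is the same statement applied to the inverse graph $H^{-1}$ (in which out-degrees become in-degrees, $U_0$ plays the role of $U_2$, and the constant $C$ is replaced by $C^{-1}$), so it requires no separate work once the machinery is set up.

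The main obstacle I anticipate is justifying the per-threshold inequality $|Y_{\geq k}| = C|X_{\geq k}|$ cleanly, i.e. showing that the expansion constant $C$ is respected \emph{level-by-level in the incoming-degree filtration}, not merely in aggregate. The subtlety is that $\im^{-1}$ of a high-degree vertex in $U_2$ need not consist of high-degree vertices of $U_1$, so one cannot naively restrict the hypothesis to $X_{\geq k}$. The fix I would pursue is to invoke the downward condition directly: for an edge $vw$ with $v\in U_1$, $w\in U_2$, there is a matching from $\im^{-1}(v)\subseteq U_0$ to $\im^{-1}(w)\subseteq U_0$, and chaining these matchings across all $v\in X_{\geq k}$ controls how in-degrees propagate; combined with the hypothesis $|\im^{-1}(S)|\geq C^{-1}|S|$ on the relevant $S\subseteq U_1$ this pins the counts. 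If that chaining turns out awkward, the alternative is a purely extremal argument: since equality holds in $|U_2|=C|U_1|$, \emph{every} set $S\subseteq U_1$ satisfies $|\im(S)|=C|S|$ exactly (strict inequality for any single $S$ would, by a complementation/averaging argument using that $U_1$ itself is tight, force strict inequality to be contradicted elsewhere), and from "$\im$ multiplies cardinality by exactly $C$ on all subsets" one extracts the degree-class identity $C|X_i|=|Y_i|$ by a short inclusion–exclusion over the degree classes. I would present whichever of these two is shorter; the extremal version is likely the one that keeps the exposition free of further invocations of Hall or Menger, in keeping with the paper's stated aim.
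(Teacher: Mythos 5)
There is a genuine gap, and it sits precisely at the per-threshold reverse inequality that you yourself flagged as the main obstacle. First, a repair of the setup: the hypothesis only quantifies over $S\subseteq U_1$, with $\im(S)\subseteq U_2$ and $\im^{-1}(S)\subseteq U_0$, so you are not allowed to apply $|\im^{-1}(S)|\geq C^{-1}|S|$ with $S=U_2$; consequently the aggregate equality $|U_2|=C|U_1|$ is not established by your first paragraph, and the ``extremal'' fallback collapses twice over, since even granted $|\im(U_1)|=C|U_1|$ it is false that every $S\subseteq U_1$ satisfies $|\im(S)|=C|S|$ (images of $S$ and $S^c$ may overlap: in a regular commutative graph of ratio $C$ with incoming degree $d>1$ a singleton has $|\im(S)|=Cd>C|S|$). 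Second, the forward half of your filtration argument is actually easier than you think: by property (2) of Section \ref{Properties of Commutative Graphs} the incoming degree is nondecreasing along edges, so $\im(X_{\geq k})\subseteq Y_{\geq k}$ \emph{does} hold (you assert it fails), and with the hypothesis this gives $|Y_{\geq k}|\geq C|X_{\geq k}|$ for every $k$ --- this is exactly the paper's family of partial-sum inequalities. The irreparable step is the claimed ``symmetric'' bound $|X_{\geq k}|\geq C^{-1}|Y_{\geq k}|$: passing to the inverse graph does not help, because there the middle layer is still $U_1$ and the hypothesis still only constrains subsets of $U_1$; there is no assumed lower bound on $|\im^{-1}(S)|$ for $S\subseteq U_2$, and the degree monotonicity points the wrong way, so $\im^{-1}(Y_{\geq k})$ need not lie in $X_{\geq k}$. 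The suggested ``chaining of matchings'' does not supply the missing inequality, so the identity $|Y_{\geq k}|=C|X_{\geq k}|$, and hence the lemma, is not proved.

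The idea your proposal is missing is that the two filtrations must be played off against each other rather than handled separately. The paper never proves the reverse per-threshold inequalities directly: it sums the forward inequalities over $k$, noting $\sum_{k\geq 1}|X_{\geq k}|=\sum_i i\,|X_i|=|E(U_0,U_1)|$ (each $v\in X_i$ has incoming degree $i$) and $\sum_{k\geq 1}|Y_{\geq k}|\leq |E(U_1,U_2)|$ (each $v\in Y_{\geq k}$ has at least $k$ incoming edges), to get $|E(U_0,U_1)|\leq C^{-1}|E(U_1,U_2)|$; then it runs the same argument on the outgoing-degree classes $X'_i,Y'_i$ (equivalently, on the inverse graph with constant $C^{-1}$) to get $|E(U_1,U_2)|\leq C|E(U_0,U_1)|$. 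Sandwiching the two forces equality at every intermediate step, and it is this equality case that simultaneously yields $C|X_i|=|Y_i|$ and $C^{-1}|X'_i|=|Y'_i|$. In particular your reduction of the second claim to the first via the inverse graph, while correct as a formal equivalence, cannot be exploited on its own: each half is what forces the other to be tight, so the two must be proved together through the edge-count double counting.
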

\begin{proof}
The sets $X_i$ form a partition of $U_1$. We partition $U_2$ into:

$T_k =  \im (X_k)$ \\
$T_{k-1} = \im(X_{k-1}) \backslash T_k$ \\
$\vdots$ \\
$T_1 = \im(X_1) \backslash (T_2\cup \dots \cup T_{k})$

Similarly we have a partition of $U_1$ into $X^\prime_1,\dots,
X^\prime_{k^\prime}$ and a partition of $U_0$ into:

$T^\prime_{k^\prime} =  \im^{-1}(X^\prime_{k^\prime})$ \\
$T^\prime_{k^\prime-1} = \im^{-1}(X^\prime_{k^\prime-1}) \backslash T_{k^\prime}$ \\
$\vdots$ \\
$T^\prime_1 = \im^{-1}(X^\prime_1) \backslash (T^\prime_2\cup
\dots \cup T^\prime_{k^\prime})$

This is probably a good moment for the reader to look at Figure
\ref{Figure of Technical Lemma}.

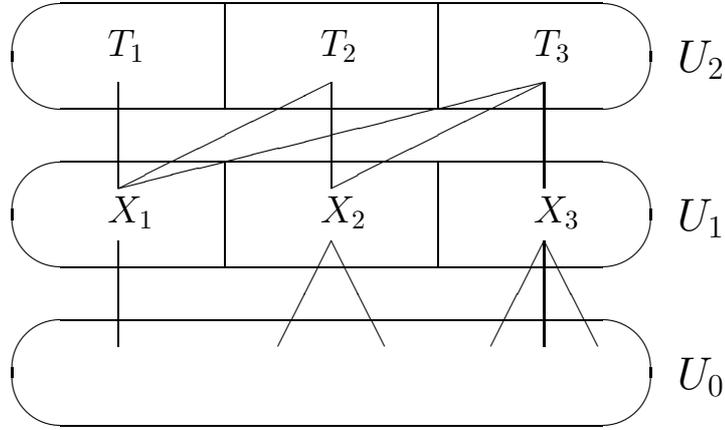
\begin{figure}[ht]
\begin{center}
\setlength{\unitlength}{0.7 cm}
 \begin{picture}(16,8)
  \put(14.5,6.7){\Large{$U_2$}}
  \put(8,7){\oval(12,2)}
  \put(6,6){\line(0,1){2}}
  \put(10,6){\line(0,1){2}}
  \put(3.8,7){\large{$T_1$}}
  \put(7.8,7){\large{$T_2$}}
  \put(11.8,7){\large{$T_3$}}
  \multiput(4,4.5)(4,0){3}{\line(0,1){2}}
  \put(4,4.5){\line(2,1){4}}
  \put(4,4.5){\line(4,1){8}}
  \put(8,4.5){\line(2,1){4}}
  \put(14.5,3.7){\Large{$U_1$}}
  \put(8,4){\oval(12,2)}
  \put(6,3){\line(0,1){2}}
  \put(10,3){\line(0,1){2}}
  \put(3.8,3.85){\large{$X_1$}}
  \put(7.8,3.85){\large{$X_2$}}
  \put(11.8,3.85){\large{$X_3$}}
  \put(4,3.5){\line(0,-1){2}}
  \put(8,3.5){\line(1,-2){1}}
  \put(8,3.5){\line(-1,-2){1}}
  \put(12,3.5){\line(0,-1){2}}
  \put(12,3.5){\line(1,-2){1}}
  \put(12,3.5){\line(-1,-2){1}}
  \put(14.5,0.7){\Large{$U_0$}}
  \put(8,1){\oval(12,2)}
 \end{picture}
\end{center}
\caption{\label{Figure of Technical Lemma}\small{An illustration
for the $k=3$ case.}}
\end{figure}
By the definition of the $T_i$ we have that $$\im (X_j\cup \dots
\cup X_k ) = T_j \cup\dots\cup T_k.$$ If we let $x_i = |X_i|$
and $t_i = |T_i|$, then the hypothesis on $H$ implies that
$$ \sum_{i=j}^k t_i \geq C \sum_{i=j}^k x_i \qquad\mbox{for all} \quad 1\leq j \leq k .$$
Adding these inequalities for $j=1, \dots , k$ gives
$$ \sum_{i=1}^k i t_i \geq C \sum_{i=1}^k i x_i .$$
It follows from Pl\"{u}nnecke's downward condition and the definition of $T_i$ and $X_i$ that
$d^-(v)\geq i$ for all $v\in T_i$. Hence
\begin{eqnarray*}
|E(U_0,U_1)|& = & \sum_{i=1}^{k} |E(U_0,X_i)| \\
            & = & \sum_{i=1}^{k} i x_i \\
            & \leq & C^{-1} \sum_{i=1}^{k} i t_i \\
            & \leq & C^{-1} \sum_{i=1}^{k} |E(U_1,T_i)| \\
            & = & C^{-1} |E(U_1,U_2)|
\end{eqnarray*}
We repeat the above calculation this time using the second partition of $U_1$
and get
\begin{eqnarray*}
|E(U_1,U_2)|& \leq C |E(U_0,U_1)|.
\end{eqnarray*}
Putting everything together yields:
$$ |E(U_0,U_1)| \leq C^{-1} |E(U_1,U_2)| \leq |E(U_0,U_1)| .$$
We must therefore have equality in every step, which implies that
$Y_i=T_i$ and $Y^\prime_i=T^\prime_i$, as well as $C |X_i|=|Y_i|$
and $C^{-1} |X^\prime_i|=|Y^\prime_i|$.
\end{proof}
We now apply the lemma to ``pull down'' minimal separating sets in
the special, yet important, class of graphs of level two discussed
in the beginning of the subsection.
\begin{lemma}\label{Three Layer Pull Down}
Let $C$ be a positive real and $H$ be a weighted commutative graph of
level two with vertex set $U_0\cup U_1\cup U_2$ and $w(v)=C^{-i}$
for all $v\in V_i$. Suppose that $U_1$ is a separating set of
minimum weight. Then so is $U_0$.
\end{lemma}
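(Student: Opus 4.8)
The plan is to show that the middle layer $U_1$, being a separating set of minimum weight, has weight equal to $w(U_0)=|U_0|$, and that we can replace it by $U_0$ without increasing the weight. First I would observe that since $U_1$ is a separating set (of level-two graph $H$, so maximal paths have length two and must be hit in the middle), the minimum-weight hypothesis forces, for every $Z\subseteq U_1$, the inequalities $w(\im(Z))\ge w(Z)$ and $w(\im^{-1}(Z))\ge w(Z)$ — otherwise we could swap $Z$ for its image or preimage and lower the weight. Spelling this out with the explicit weights $w(v)=C^{-i}$, these become $|\im(Z)|\ge C|Z|$ and $|\im^{-1}(Z)|\ge C^{-1}|Z|$, which is precisely the hypothesis needed to invoke Lemma \ref{Technical Three Layer Pull Down}.

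Next I would apply Lemma \ref{Technical Three Layer Pull Down} to $H$. Using the notation there, $U_1$ partitions into the sets $X_i$ (vertices of incoming degree $i$) and $U_2$ into the $Y_i=T_i$ with $C|X_i|=|Y_i|$, and dually $U_1$ partitions into the $X'_i$ (outgoing degree $i$) and $U_0$ into the $Y'_i=T'_i$ with $C^{-1}|X'_i|=|Y'_i|$. From $C^{-1}|X'_i|=|Y'_i|$ and the fact that the $X'_i$ partition $U_1$ and the $Y'_i$ partition $U_0$, summing gives
\[
w(U_1) = \sum_i |X'_i|\,C^{-1} = \sum_i |Y'_i| = |U_0| = w(U_0),
\]
since $U_0$ and $U_1$ carry weights $C^0$ and $C^{-1}$ respectively in this level-two graph (after reindexing so the layers are $0,1,2$ — I would double-check the bookkeeping here, as the weight of $U_1$ is $C^{-1}|U_1|$ and the identity $C^{-1}|X'_i|=|Y'_i|$ summed over $i$ yields exactly $C^{-1}|U_1|=|U_0|$). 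Hence $U_0$ has the same weight as $U_1$.

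It then remains to check that $U_0$ is genuinely a separating set: every maximal (length-two) path in $H$ starts in $U_0$, so $U_0$ meets all of them. Combined with the weight equality $w(U_0)=w(U_1)$ and the minimality of $w(U_1)$, this gives that $U_0$ is also a separating set of minimum weight, as claimed. I do not expect a serious obstacle; the one point requiring care is the reduction in the first paragraph — verifying that the swap $Z\mapsto \im(Z)$ (removing $Z$ from $U_1$ and adjoining $\im(Z)$ in $U_2$) still yields a separating set, so that minimality legitimately forces $w(\im(Z))\ge w(Z)$. This holds because any maximal path avoiding the new set must have passed through some vertex of $Z$ and then landed in $\im(Z)$, a contradiction; making this airtight is the main thing to get right, after which the lemma is an immediate consequence of Lemma \ref{Technical Three Layer Pull Down}.
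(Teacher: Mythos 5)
Your proposal is correct and follows essentially the same route as the paper: the minimality of $w(U_1)$ applied to the separating sets $Z^c\cup\im(Z)$ and $Z^c\cup\im^{-1}(Z)$ yields the expansion hypotheses $|\im(Z)|\geq C|Z|$ and $|\im^{-1}(Z)|\geq C^{-1}|Z|$, after which Lemma \ref{Technical Three Layer Pull Down} and summing $C^{-1}|X^\prime_i|=|Y^\prime_i|$ gives $w(U_1)=|U_0|=w(U_0)$, exactly as in the paper. The points you flag for care (that the swapped sets are still separating, and the weight bookkeeping) are handled in the paper in precisely the way you indicate.
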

\begin{proof}
For every $S\subseteq U_1$ both $S^c\cup \im(S)$ and $S^c \cup
\im^{-1}(S)$ are separating sets. The minimality of $w(U_1)$
implies that
$$ |\im(S)|\geq C|S|~~~\mathrm{and}~~~ |\im^{-1}(S)|\geq C^{-1}|S|.$$
We can therefore apply Lemma \ref{Technical Three Layer Pull Down}
to get
$$w(U_1) = C^{-1} |U_1| = C^{-1} \left|\bigcup_{i=1}^{k^\prime} X^\prime_i \right| =
C^{-1} \sum_{i=1}^{k^\prime} |X^\prime_i| = \sum_{i=1}^{k^\prime}
|Y^\prime_i| = \left| \bigcup_{i=1}^{k^\prime} Y^\prime_i \right|
= |U_0| = w(U_0). \qedhere$$
\end{proof}
We are finally able to prove Lemma \ref{Pull Down}, which will
finish the proof of Theorem \ref{Plunnecke}.
\begin{proof}[Proof of Lemma \ref{Pull Down}]
Let $S$ be any separating set of minimum weight and $S_i = S\cap
V_i$. Let $j\in \{0,1,\dots,h-1\}$ be maximal subject to $S_j\neq
\varnothing$. We will show that when $j>0$ we can find another
separating set of minimum weight that lies in $V_0\cup\dots\cup
V_{j-1}\cup V_h$.

We work in a subgraph $H$ of level two consisting of all paths in
$G$ that start in a suitably chosen $U_0\subseteq V_{j-1}$ and end
in a suitably chosen $U_2\subseteq V_{j+1}$. $U_0$ consists of all
vertices in $V_{j-1}$ that can be reached via paths in $G$ that
successively pass from $S_0^c,\dots, S_{j-1}^c$ and $U_2$ consists
of all vertices in $V_{j+1}$ that lead to $S_h^c$. $S$ is a
separating set of minimal weight and thus the middle layer $U_1$
equals $S_j$. In the weighted version of $H$, where vertices in
$U_i$ have weight $C^{-i}$, $U_1$ is a separating set of minimum
weight (if not let $S_j^\prime$ be a separating set of smaller
weight and observe that $S_0\cup\dots\cup S_{j-1}\cup
S_j^\prime\cup S_h$ is then a separating set in $G$ of smaller
weight than $S$). By Lemma \ref{Three Layer Pull Down} $U_0$ is
also a separating set of minimum weight in $H$ and thus
$S_0\cup\dots\cup S_{j-1}\cup U_0\cup S_h$ is a separating set of
minimum weight in $G$.
\end{proof}
Looking back at the proof of Pl\"unnecke's inequality we realise
that Pl\"unnecke's conditions were not used directly. Instead we
relied on two properties that follow from them: properties (1) and
(2) in Section \ref{Commutative Graphs}. It is clear
that both are necessary in the proof. It is therefore natural to
ask how different this pair of conditions is compared to
Pl\"unnecke's.

Ruzsa has already noted in \cite{Ruzsa2009} that the two sets of conditions are equivalent and as a consequence the proof of Pl\"unnecke's inequality requires the full strength of Pl\"unnecke's conditions. This observation was left as an exercise and so we offer a quick explanation. Suppose that Pl\"unnecke's, say upward, condition fails for an edge $uv$. It
follows that there is no matching from $\im(v)$ to $\im(u)$ in the
bipartite graph $G(\im(v),\im(u))$ where $xy$ is an edge if and
only if $yx$ is an edge in $G$. By Lemma \ref{Hall} we know there
exists $S\subseteq\im(v)$ such that
$|\im^{(-1)}(S)|<|S|$. Now consider the channel $H$
between $u$ and $S$. This is a commutative graph and $uv\in E(H)$,
yet $d^+_H(u)=|\im^{(-1)}(S)|<|S|=d^+_H(v)$.

Before moving on we prove a slight variation of Lemma
\ref{Technical Three Layer Pull Down}, which will be useful in Section \ref{Inverse}.
\begin{lemma}\label{Technical Three Layer Pull Down Prime}
Let $C$ be a positive real and $H$ be a commutative graph of level two
with vertex set $U_0\cup U_1\cup U_2$. Suppose that for all
$S\subseteq U_1$ we have
$$|\im^{-1}(S)|\geq C^{-1} |S| ~~\mbox{and}~~C |E(U_0,U_1)| = |E(U_1,U_2)|.$$ Then $|U_1|= C |U_0|$.
\end{lemma}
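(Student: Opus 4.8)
The plan is to recycle the argument for Lemma~\ref{Technical Three Layer Pull Down}, but to run only the half of it built around the partition of $U_1$ by \emph{outgoing} degree, and to let the new hypothesis $C|E(U_0,U_1)|=|E(U_1,U_2)|$ play the role that, in that lemma, the second partition played. As there, I would work under the (tacit, in the proof of Lemma~\ref{Technical Three Layer Pull Down}) convention that every vertex of $U_0$ has an outgoing edge — this is automatic in the intended application, where $H$ will be a channel.

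First I would partition $U_1$ into the sets $X'_1,\dots,X'_{k'}$, with $X'_i$ the set of vertices of $U_1$ of outgoing degree $i$, and partition $U_0$ accordingly into $T'_{k'}=\im^{-1}(X'_{k'})$, $T'_{k'-1}=\im^{-1}(X'_{k'-1})\setminus T'_{k'}$, and so on, exactly as in the earlier lemma. Writing $x'_i=|X'_i|$ and $t'_i=|T'_i|$, the identity $\im^{-1}(X'_j\cup\dots\cup X'_{k'})=T'_j\cup\dots\cup T'_{k'}$ together with the hypothesis $|\im^{-1}(S)|\ge C^{-1}|S|$ gives
\[
\sum_{i=j}^{k'}t'_i\;\ge\;C^{-1}\sum_{i=j}^{k'}x'_i\qquad\text{for all }1\le j\le k'.
\]
Summing these over $j=1,\dots,k'$ yields $\sum_i i\,t'_i\ge C^{-1}\sum_i i\,x'_i$, and since each vertex of $X'_i$ contributes exactly $i$ edges to $E(U_1,U_2)$ we have $\sum_i i\,x'_i=|E(U_1,U_2)|$. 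So far this is just the primed half of the proof of Lemma~\ref{Technical Three Layer Pull Down}.

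The new ingredient is to close the chain using $C|E(U_0,U_1)|=|E(U_1,U_2)|$: the displayed bound becomes $\sum_i i\,t'_i\ge C^{-1}|E(U_1,U_2)|=|E(U_0,U_1)|$. On the other hand, Pl\"unnecke's upward condition (property~(2)) forces $d^+(v)\ge i$ for every $v\in T'_i$, since such a $v$ has an edge into some $w\in X'_i$ with $d^+(w)=i$; hence $|E(U_0,U_1)|=\sum_i\sum_{v\in T'_i}d^+(v)\ge\sum_i i\,t'_i$. Sandwiching these gives equality throughout. In particular $\sum_i i\,t'_i=C^{-1}\sum_i i\,x'_i$, which forces each of the summed inequalities above to be an equality, and the $j=1$ instance reads $\sum_i t'_i=C^{-1}\sum_i x'_i$, i.e.\ $|U_0|=C^{-1}|U_1|$, as required.

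I do not anticipate a real obstacle: the substance is just the observation that precisely this half of the proof of Lemma~\ref{Technical Three Layer Pull Down}, together with the cheap edge-count hypothesis, already determines the ratio $|U_1|/|U_0|$. The one point needing care is the equality analysis — noting that tightness of the \emph{summed} inequality forces tightness of each summand, so that the $j=1$ case yields the cardinality identity and not merely the bound $|U_1|\le C|U_0|$ that one gets for free by taking $S=U_1$ in the hypothesis.
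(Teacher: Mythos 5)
Your proposal is correct and follows essentially the same route as the paper's proof: the same outgoing-degree partition of $U_1$ and the corresponding layered partition of $U_0$, the same summed inequality $\sum_i i\,t'_i\geq C^{-1}\sum_i i\,x'_i$, the same use of Pl\"unnecke's upward condition to bound $|E(U_0,U_1)|$ from below by $\sum_i i\,t'_i$, and the same sandwich argument against the hypothesis $C|E(U_0,U_1)|=|E(U_1,U_2)|$ to force equality throughout, with the $j=1$ case delivering $|U_1|=C|U_0|$. The only difference is cosmetic: you make explicit the intermediate identifications $\sum_i i\,x'_i=|E(U_1,U_2)|$ and $|E(U_0,U_1)|\geq\sum_i i\,t'_i$ where the paper says ``in a similar fashion,'' and you also flag (correctly) the tacit assumption that every vertex of $U_0$ has an outgoing edge, so that the $T'_i$ genuinely partition $U_0$ — an assumption that the paper uses silently and that does hold in the intended application to channels.
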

\begin{proof}
This is almost identical to what we have already seen. We
partition $U_1$  and $U_0$ into respectively $X^\prime_1,\dots,
X^\prime_{k^\prime}$ and $T^\prime_1,\dots, T^\prime_{k^\prime}$
like in the proof of Lemma \ref{Technical Three Layer Pull Down}.
We have
$$\im^{-1}(X^\prime_j\cup \dots \cup X^\prime_{k^\prime} ) = T^\prime_j \cup\dots\cup T^\prime_{k^\prime}.$$
If we once again let $x^\prime_i = |X^\prime_i|$ and $t^\prime_i =
|T^\prime_i|$, then the first hypothesis on $H$ implies that
\begin{eqnarray}\label{Partial Sum Prime}
C \sum_{i=j}^{k^\prime} t^\prime_i \geq \sum_{i=j}^{k^\prime}
x^\prime_i \qquad\mbox{for all} \quad 1\leq j \leq k^\prime.
\end{eqnarray}
Adding the $k^\prime$ inequalities gives
\begin{eqnarray*}
C \sum_{i=1}^{k^\prime} i t^\prime_i \geq \sum_{i=1}^{k^\prime} i
x^\prime_i.
\end{eqnarray*}
From Pl\"{u}nnecke's upward condition we know that $d^+(v)\geq i$
for all $v\in T^\prime_i$ and in a similar fashion to the proof of
Lemma \ref{Technical Three Layer Pull Down} we get
\begin{eqnarray*}
|E(U_1,U_2)| & \leq & C |E(U_0,U_1)|.
\end{eqnarray*}
The second condition on $H$ implies that equality must hold in
every step. In particular setting $j=1$ on \eqref{Partial Sum
Prime} gives
$$ C|U_0| = C \sum_{i=1}^{k^\prime} t^\prime_i = \sum_{i=1}^{k^\prime} x^\prime_i = |U_1|. \qedhere$$
\end{proof}

\section[Regular Commutative Graphs]{Regular Commutative Graphs}
\label{Regular}

\vspace{1ex}
We now turn to investigating the sharpness of Pl\"unnecke's inequality and prove Theorems \ref{Sharp Commutative for Finite} and \ref{Sharp Commutative for Infinite}. For the former we construct arbitrarily long commutative graphs where $D_i^{1/i}(G)$ is constant. The latter will be proved by examining the growth of commutative graphs that originate at a singleton. 

\subsection{Regular commutative graphs}

\label{Regular Commutative Graphs}

The two theorems are closely related with the existence of regular commutative graphs.
\begin{definition}
Let $C\in \mathbb{Q}^+$. $R_{C}$ is a \textit{regular commutative graph of ratio $C$} whenever $d^-(v)=d$ and $d^+(v)=C d$ for all $v\in
V(G)$ and some $d\in \mathbb{Z}^+$.
\end{definition}
It is easy to see why they are important in this context.
\begin{lemma}\label{regular-sharp}
Let $C\in \mathbb{Q}^+$ and $i\leq h$ be positive integers.
Suppose that $G$ is a regular commutative graph of ratio $C$ with vertex set $V_0\cup\dots\cup V_h$. Then
$$D_i(G) = C^i$$
and
$$ |V_i| = C^i |V_0|.$$
Furthermore the inverse of $G$ is an $R_{C^{-1}}$.
\end{lemma}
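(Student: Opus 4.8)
The plan is to prove the three assertions of Lemma~\ref{regular-sharp} in sequence, each one feeding into the next. First I would establish the layer-size formula $|V_i| = C^i |V_0|$. The key observation is double-counting the edges between consecutive layers $V_{i-1}$ and $V_i$: since every vertex $v \in V_{i-1}$ has outgoing degree $Cd$ and every vertex $w \in V_i$ has incoming degree $d$, we get $|E(V_{i-1},V_i)| = Cd\,|V_{i-1}| = d\,|V_i|$, hence $|V_i| = C|V_{i-1}|$. (Here I am using that every vertex of $V(G)$ has the stated degrees; in particular vertices in $V_h$ have outgoing degree $Cd$ into the nonexistent layer $V_{h+1}$, so strictly one should read the definition of $R_C$ as applying only where the relevant edges exist, or equivalently add a convention — I expect the paper intends the degree conditions to hold wherever meaningful, which is all that is needed since only edges $E(V_{i-1},V_i)$ with $1 \le i \le h$ enter the count.) Iterating gives $|V_i| = C^i|V_0|$, and in particular $C^i|V_0|$ must be a positive integer for each $i$, which is consistent with $C \in \mathbb{Q}^+$ once $d$ and $|V_0|$ are chosen appropriately.

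Second, I would prove $D_i(G) = C^i$. The upper bound $D_i(G) \le C^i$ is immediate by taking $Z = V_0$ in the definition of the magnification ratio: $|\im^{(i)}(V_0)|/|V_0| \le |V_i|/|V_0| = C^i$. For the matching lower bound $D_i(G) \ge C^i$, the cleanest route is to invoke Theorem~\ref{Plunnecke} (Pl\"unnecke's inequality itself): we have just shown $D_h(G) \le C^h$, and in fact $D_h(G)$ cannot be smaller than $C^h$ either — but rather than argue that directly, I would observe that $G$ is a commutative graph with $D_h(G) = \Delta^h$ for some $\Delta \le C$, apply Theorem~\ref{Plunnecke} to get $D_i(G) \ge \Delta^i$, and then close the gap. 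Actually the slicker argument avoids circularity worries entirely: for \emph{any} $\varnothing \ne Z \subseteq V_0$, a direct counting argument shows $|\im^{(i)}(Z)| \ge C^i|Z|$. Indeed, by the upward Pl\"unnecke condition (or more elementarily by the regularity), one shows $|\im(Z)| \ge C|Z|$ for every $Z$ in any layer: the edges leaving $Z$ number $Cd\,|Z|$, and they land in $\im(Z)$ whose vertices each absorb at most $d$ of them, so $|\im(Z)| \ge Cd|Z|/d = C|Z|$. Iterating this layer by layer gives $|\im^{(i)}(Z)| \ge C^i|Z|$, hence $D_i(G) \ge C^i$, and combined with the upper bound, $D_i(G) = C^i$.

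Third, I would verify that the inverse graph $I$ of $G$ (as defined in property~(4) of Section~\ref{Commutative Graphs}) is an $R_{C^{-1}}$. By construction, reversing all edges swaps incoming and outgoing degrees at every vertex: if $v$ has $d^-_G(v) = d$ and $d^+_G(v) = Cd$, then in $I$ it has $d^-_I(v) = Cd$ and $d^+_I(v) = d$. Writing $d' = Cd$ (a positive integer, since all degrees are integers), we have $d^-_I(v) = d'$ and $d^+_I(v) = d = C^{-1}d'$ for every $v$, which is exactly the defining condition for a regular commutative graph of ratio $C^{-1}$. We also know from property~(4) that $I$ is a commutative graph, so nothing further is needed. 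The only mild subtlety is ensuring $d' = Cd \in \mathbb{Z}^+$ and $C^{-1} \in \mathbb{Q}^+$, both of which are clear.

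The main obstacle is really the lower bound $D_i(G) \ge C^i$: one wants to be sure the per-layer expansion estimate $|\im(Z)| \ge C|Z|$ genuinely holds for \emph{every} subset $Z$ of \emph{every} layer, not just for $Z$ a full layer. The edge-counting argument above does give this unconditionally from regularity alone (count edges out of $Z$, note each target vertex has in-degree exactly $d$), so no appeal to Menger's theorem or to the full Pl\"unnecke inequality is needed — this keeps the lemma elementary and self-contained, which matches the spirit of the paper. I would present the per-layer inequality as a small displayed claim and then iterate it, rather than routing through Theorem~\ref{Plunnecke}, to avoid even the appearance of circularity given that these $R_C$ graphs are later used (in Section~\ref{Regular}) to prove the general case of Pl\"unnecke's inequality.
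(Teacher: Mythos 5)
Your proposal is correct and follows essentially the same route as the paper: double-count edges $|E(V_{i-1},V_i)| = Cd|V_{i-1}| = d|V_i|$ to get the layer sizes, use the per-layer edge-counting bound $|\im(Z)| \geq Cd|Z|/d = C|Z|$ iterated to get $D_i(G)\geq C^i$, take $Z=V_0$ for the upper bound, and note that inverting the graph swaps $d^\pm$ to give an $R_{C^{-1}}$. You were also right to reject routing through Theorem~\ref{Plunnecke}, both to avoid circularity and because the paper indeed uses the elementary counting argument for exactly this reason.
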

\begin{proof}
Suppose that $d^-=d$ and $d^+=Cd$ for all vertices of the graph. There are $C d |Z|$ edges coming out from every $Z
\subseteq V_0$. These edges land in at least $C |Z|$ vertices in
$V_1$ and hence we get that $|\im(Z)| \geq C|Z|$ -- and
consequently that $D_1(G)\geq C$. Looking at
$\im^{(i)}(Z)=\im(\im^{(i-1)}(Z))$ we see that $|\im^{(i)}(Z)|
\geq C^i|Z|$ -- and consequently that $D_i(G)\geq C^i$. Next we
count the edges between $V_{i-1}$ and $V_i$ in two different ways
to get
$$ C d\,|V_{i-1}| =|E(V_{i-1},V_i)| = d\, |V_i|.$$
Hence $|V_i| = C^i |V_0|$, which shows that $D_i(G)\leq C^i$.

We know that the inverse of $G$ is a commutative graph. It is
furthermore regular with ratio $C^{-1}$.
\end{proof}
To prove Theorem \ref{Sharp Commutative for Finite} it is therefore enough to construct arbitrarily long $R_C$ for all $C\in \mathbb{Q}^+$. Let us begin by two simple yet fundamental observations. It is enough to construct arbitrarily long $R_{k}$ for all positive integers $k$
because if we let $C=p/q$ be any rational, then the Cartesian product of an $R_{p}$ with the inverse of an $R_{q}$ is an $R_{C}$. A path is an infinite $R_1$ so from now on we will focus on $R_{k}$ for integer $k>1$.

\subsection{Arbitrarily long regular commutative graphs}

\label{Arbitrarily R_{k}}

We begin with the explicit construction of arbitrarily long $R_k$. Getting an $R_{k}$ of level two is not hard, but we will not
present the simplest example as it cannot be extended to an
$R_{k}$ of level three. We will instead inductively build
arbitrarily long $R_{k}$. Our aim is to take an $R_{k}$ of
level $h$ and add a layer from below in such as a way as to get an
$R_{k}$ of level $h+1$. To achieve this we have to tweak the
$R_{k}$ of level $h$ slightly by taking its Cartesian product
with a suitably chosen commutative graph. The following graph has
the desired properties.
\begin{lemma}\label{One to k map}
Let $k$ and $h$ be positive integers. There exists an $R_{1}$ of level
$h$ that gives rise to a one-to-$k$ matching from the image of any
$v\in U_0$ to $U_0$ itself. $U_0$ is the bottom layer
of the graph.
\end{lemma}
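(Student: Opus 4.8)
The plan is to give an explicit Cayley-type construction. Fix $d=k+1$ and choose a finite abelian group $\Gamma$ together with a Sidon set $B\subseteq\Gamma$ of size $|B|=d$; such a pair exists by classical constructions (for instance a $(k+1)$-element subset of a Singer difference set in $\mathbb{Z}/(q^2+q+1)\mathbb{Z}$ for a prime power $q\geq k$, or simply a Sidon set of size $k+1$ in $\mathbb{Z}$ reduced modulo a large prime). Define $G$ to have layers $U_0=U_1=\dots=U_h=\Gamma$ and a directed edge from $x\in U_i$ to $y\in U_{i+1}$ exactly when $y-x\in B$.

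First I would check that $G$ is a regular commutative graph of ratio $1$. Regularity is immediate: every vertex outside $U_h$ has outgoing degree $|B|=d$ and every vertex outside $U_0$ has incoming degree $|B|=d$. For Pl\"unnecke's conditions, given a directed edge $uv$ with $v-u=\beta\in B$, the translation $y\mapsto y-\beta$ carries $\im(v)=v+B$ bijectively onto $\im(u)=u+B$ and sends each $y$ to a vertex joined to it by a directed edge (since $y-(y-\beta)=\beta\in B$); this is the matching required by the upward condition, and the translation $y\mapsto y+\beta$ supplies the one required by the downward condition. Hence $G$ is an $R_1$ of level $h$.

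Next I would establish the matching property. Fix $v\in U_0$; then $\im(v)=v+B\subseteq U_1$, and a vertex $v+b$ of $\im(v)$ has down-neighbourhood $(v+b)-B$ inside $U_0$. By Hall's theorem (Lemma \ref{Hall}) a one-to-$k$ matching from $\im(v)$ to $U_0$ exists as soon as $|\,(v+B')-B\,|=|B'-B|\geq k|B'|$ for every nonempty $B'\subseteq B$. Here the Sidon property does the work: for distinct $b_1,b_2\in B'\subseteq B$ the translates $b_1-B$ and $b_2-B$ meet only in $\{0\}$, because a common element $z=b_1-x=b_2-y$ with $x,y\in B$ forces $b_1+y=b_2+x$, hence $\{b_1,y\}=\{b_2,x\}$ as multisets, and since $b_1\neq b_2$ this gives $x=b_1$ and $z=0$. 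As each $b-B$ has $d$ elements and contains $0$, we get $|B'-B|=|B'|(d-1)+1=k|B'|+1>k|B'|$ because $d-1=k$, and the Hall condition holds.

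The main obstacle is pinning down the right degree. The naive choice $d=k$ fails: the $d$ translates $b-B$ all pass through $0$, so $\im^{-1}(\im(v))$ has only $d(d-1)+1$ vertices, which is $<kd$ when $d=k$; taking $d=k+1$ is exactly what buys the extra room, and one must keep the Hall inequality in view for all subsets $B'$, not merely $B'=B$. A secondary point needing care is the direction bookkeeping: the one-to-$k$ matching runs from a subset of $U_1$ \emph{down} into $U_0$, so the relevant bipartite adjacency is ``$y$ lies below $x$'', i.e.\ $x-y\in B$, which is what makes $B'-B$ (rather than $B-B'$) the quantity to estimate.
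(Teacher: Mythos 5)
Your proof is correct, and it reaches the lemma by a genuinely different mechanism than the paper. Both constructions are addition graphs over a finite abelian group, but the paper takes $\mathbb{Z}_{2k^2}$ with the $2k$-element set $B=\{0,1,\dots,k-1,k,2k,\dots,k^2\}$ and exhibits the one-to-$k$ matching completely explicitly via the map $\theta$, checking disjointness of the fibres by hand; you instead take $B$ to be a Sidon set of size $k+1$, note that distinct translates $b_1-B$, $b_2-B$ meet only in $0$, hence $|B'-B|=k|B'|+1$ for every nonempty $B'\subseteq B$, and then invoke Hall's theorem (Lemma \ref{Hall}, already available in the paper) to produce the matching non-constructively. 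Your route buys a smaller degree ($k+1$ rather than $2k$) and a smaller group (about $k^2$ via Singer sets), and the Hall-condition computation is arguably cleaner than verifying the explicit $\theta$; the cost is an appeal to the classical existence of Sidon sets, though this stays elementary and explicit if one takes, say, $\{2^0,2^1,\dots,2^k\}$ reduced modulo a prime $p>2^{k+1}$. Your verification of the Pl\"unnecke conditions by translation is fine (addition graphs are commutative, as the paper records), and since Proposition \ref{Multilayer Extension} uses the lemma only as a black box, substituting your graph would merely change the bookkeeping constants there (the bottom layer is no longer of size $2k^2$), not the argument.
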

\begin{proof}
We work in $\mathbb{Z}_{2k^2}$ and consider the level $h$ addition
graph $G_+(A,B)$  for $A =\mathbb{Z}_{2k^2}$ and
$$B=\{0,1,2,\ldots, k-1,k,2k,3k,\ldots,k^2\}.$$
This is an $R_{1}$. We define a map $\theta$ from the image of
any $v\in U_0$ to $U_0^k$  by:

$\theta(v+0) = \{v,v-1,v-2,\ldots,v-(k-1)\} \\
\theta(v+1) = \{v+1,v+1-2k,v+1-3k,\ldots,v+1-k^2\} \\
\theta(v+2) = \{v+2,v+2-2k,v+2-3k,\ldots,v+2-k^2\} \\
\vdots \\
\theta(v+k-1) = \{v+k-1,v+(k-1)-2k,v+(k-1)-3k,\ldots,v+(k-1)-k^2\} \\
\theta(v+k) = \{v+k,v-k,v-2k,\ldots,v-(k-1)k\} \\
\theta(v+2k) = \{v+2k,v+2k-1,v+2k-2,\ldots,v+2k-(k-1)\} \\
\vdots \\
\theta(v+k^2) = \{v+k^2,v+k^2-1,v+k^2-2,\ldots,v+k^2-(k-1)\}.$

A routine check confirms that every element of $\theta(v+j)$ is
indeed joined to $v+j$ in the graph. For example, $v+1$ is joined
to $v+1$ as it equals $v+1-0$ and $v-k$ is joined to $v+k$ as it
equals $v+k-2k$. A second routine check confirms that
$\theta(v+b)\cap \theta(v+b^\prime) = \emptyset$ for all $v\in
U_0$ and distinct $b,b^\prime \in B$. In other words the graph
yields a one-to-$k$ matching between the image of any $v\in U_0$
and $U_0$ itself, as claimed.
\end{proof}
We can now complete the inductive step by combining the above with
Lemma \ref{Hall} and the multiplicativity of degrees.
\begin{proposition}\label{Multilayer Extension}
Suppose that an $R_k$ of level $h$ exists with the property that every
vertex in the first layer is joined to every vertex in the second
layer. Then an $R_{k}$ of level $h+1$ with the same property exists.
\end{proposition}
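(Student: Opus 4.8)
The plan is to ``fatten'' the given $R_k$ by taking its Cartesian product with the auxiliary $R_1$ of Lemma~\ref{One to k map} and then to glue a new bottom layer to it by a complete bipartite join. Let $G$ be the given $R_k$ of level $h$, with bottom layer $V_0$; write $d$ for the common incoming degree on $V_1,\dots,V_h$, so the outgoing degree on $V_0,\dots,V_{h-1}$ is $kd$, and the hypothesis on $G$ forces $|V_0|=d$ and $|V_1|=kd$. Let $F$ be the $R_1$ of level $h$ furnished by Lemma~\ref{One to k map}, with bottom layer $U_0$: its layers all have size $N=|U_0|$, its relevant degrees all equal $m=d^\pm_F$, the explicit construction satisfies $N=km$ (there $N=2k^2$ and $m=2k$), and for every $x\in U_0$ it yields a one-to-$k$ matching from $\im_F(x)$ to $U_0$. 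Put $G'=G\times F$. By property (3) of Section~\ref{Commutative Graphs} the graph $G'$ is commutative of level $h$, and since degrees multiply it is an $R_k$ with parameter $dm$; its bottom layer is $L_0=V_0\times U_0$ with $|L_0|=dN=kdm$ and its second layer is $L_1=V_1\times U_1$ with $|L_1|=kdN=k|L_0|$.

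Now adjoin a new bottom layer $W$ with $|W|=|L_0|/k=dm$, joining every vertex of $W$ to every vertex of $L_0$, and call the resulting level-$(h+1)$ graph $\widetilde G$. Every vertex of the first layer $W$ of $\widetilde G$ is then joined to every vertex of its second layer $L_0$, as required, and counting edges shows that $\widetilde G$ is an $R_k$ with parameter $dm$: each $w\in W$ has outgoing degree $|L_0|=kdm$; each vertex of $L_0$ has incoming degree $|W|=dm$ and outgoing degree $kdm$ (inherited from $G'$); and every vertex in a higher layer keeps its $G'$-degrees.

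It remains to verify Pl\"unnecke's two conditions for $\widetilde G$. Any condition attached to an edge that is not incident to $W$ and does not emanate from $L_0$ refers to $\im$ and $\im^{-1}$ of vertices exactly as in $G'$, and is therefore inherited from the commutativity of $G'$. The downward condition for an edge leaving $W$ is vacuous, since such an edge starts in the bottom layer. The downward condition for an edge $vw$ with $v\in L_0$ and $w\in L_1$ asks for a matching from $\im^{-1}(v)=W$ into $\im^{-1}(w)$; since $\im^{-1}(w)\subseteq L_0$ has size $dm=|W|$ and $W$ is completely joined to $L_0$, any bijection works. The only substantial case is the upward condition for an edge $wz$ with $w\in W$ and $z=(u^{*},x^{*})\in L_0$: it demands a matching from $\im(z)$ into $\im(w)=L_0$ along edges of $G'$, and this is the step I expect to be the main obstacle.

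For that step, note that $\im(z)=V_1\times\im_F(x^{*})$ has size $kd\cdot m=kdm=|L_0|$, so we need a bijection $V_1\times\im_F(x^{*})\to V_0\times U_0$ carrying $(v,y)$ to some $(u,z')$ with $u^{*}v\in E(G)$ (automatic, since $G$ is completely joined between its first two layers) and $z'\in\im^{-1}_F(y)$. As the first coordinate is unconstrained, it is enough to attach to each $y\in\im_F(x^{*})$ a $k$-element subset of $\im^{-1}_F(y)$, these subsets being pairwise disjoint as $y$ varies; that is precisely the one-to-$k$ matching from $\im_F(x^{*})$ to $U_0$ granted by Lemma~\ref{One to k map}, which produces $k\,|\im_F(x^{*})|=km=N$ distinct vertices of $U_0$ and hence exhausts $U_0$. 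Composing it with any bijection $V_1\leftrightarrow\{1,\dots,k\}\times V_0$ supplies the matching. This also explains why the Cartesian product with $F$ is indispensable: attaching $W$ directly below $V_0$ would require a matching from $\im(z)$, of size $kd$, into $\im(w)\subseteq V_0$, of size $d$, which is impossible once $k>1$; replacing $G$ by $G\times F$ restores the equality $|\im(z)|=|L_0|$ and makes the one-to-$k$ matching exactly powerful enough to finish.
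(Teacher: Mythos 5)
Your proposal is correct and follows essentially the same route as the paper: take the Cartesian product of the given $R_k$ with the auxiliary $R_1$ of Lemma~\ref{One to k map}, attach a completely joined bottom layer of size $dm$, and reduce the only nontrivial check (the upward condition at the new edges) to the one-to-$k$ matching property of $F$ together with the complete join between the first two layers of the original graph. The sole difference is cosmetic: the paper verifies Hall's condition and invokes Lemma~\ref{Hall}, whereas you build the required matching explicitly from $\theta$ and a bijection $V_1\leftrightarrow\{1,\dots,k\}\times V_0$.
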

\begin{proof}
Suppose that $W_0,\dots,W_{h}$ are the layers of $R_k$ with
$|W_0|=d$. The defining properties of $R_k$ imply that $d^-=d$ and $d^+= d k$. Let $R_1$ be the graph described in Lemma \ref{One
to k map} with layers $U_0,\dots,U_{h}$.

We let $G_h = R_k \times R_1$. This graph is a regular commutative graph of ratio $k$ whose
bottom layer has size $|W_0\times U_0|=2 d k^2 $. Next we add a
layer of size $2 d k$ to the bottom and join every added vertex to
the whole of $W_0\times U_0$. Let $G_{h+1}$ be the resulting graph
of level $h+1$, which is regular with incoming degree $2 d k$ and
outgoing $2 d k^2$. To complete the proof we show that $G_{h+1}$
is a commutative graph.

We only need to check Pl\"{u}nnecke's conditions involving the
recently added bottom layer. The remaining layers pose no problem
as they belong to $G_h$, which is commutative. The downward is immediate as the size of
the bottom layer equals the incoming degree. To check the upward
we consider an edge $u(w,v)$, where $u$ lies in the bottom layer
of $G_{h+1}$ and $(w,v)$ lies in the second layer; i.e. $w\in W_0$
and $v\in U_0$. Pl\"{u}nnecke's condition requires finding a
matching from $\im_{G_{h+1}}((w,v))=W_1\times \im_{R_1}(v)$ to
$\im_{G_{h+1}}(u) = W_0\times U_0$.

With this in mind we turn our attention to the bipartite graph
$(W_1\times \im_{R_1}(v) , W_0\times U_0)$ and aim to apply Lemma \ref{Hall}. We keep the same notation as in Chapter \ref{Commutative
Graphs} and write $\Gamma(x)$ for the neighborhood of $x$ in the
bipartite graph, which is precisely $\im_{G_{h+1}}^{-1}(x)$. Let
$\pi_2$ be the projection onto $R_1$. For any $S\subseteq
W_1\times \im_{R_1}(v)$ we have from Lemma \ref{One to k map}
\begin{eqnarray*}
|\Gamma(S)| & = & \left| W_0 \times  \bigcup_{x\in \pi_2(S)}  \im^{-1}_{R_1}(x) \right | \\
            & \geq & \sum_{x\in \pi_2(S)} |W_0|\, |\theta(x)|\\
            & = & |\pi_2(S)|\, |W_0|\, k \\
            & = & |\pi_2(S)|\,|W_1| \\
            & \geq & |S|.
\end{eqnarray*}
Hence Hall's condition is satisfied and as a consequence so is
Pl\"{u}nnecke's.
\end{proof}
We construct arbitrarily long $R_k$ (and hence finish the proof of Theorem \ref{Sharp Commutative for Finite}) as follows. We start with
the two layer (and hence non-commutative) graph consisting of a
single vertex in $V_0$ joined to all $k$ vertices in $V_1$. A
first application of Proposition \ref{Multilayer Extension} yields an
$R_{k}$ of level two. Repeated applications yield an arbitrarily long $R_{k}$. 

In light of Theorem \ref{Sharp Commutative for Infinite} it should be noted that this construction does not lead to infinite regular commutative graphs as in each step the size of the bottom layer increases.

\subsection{Infinite regular commutative graphs}

\label{Infinite Regular Commutative Graphs}

The construction of arbitrarily long regular commutative graphs we have presented does not give infinite regular commutative graphs. This doesn't of course rule out their existence. In order to prove Theorem
\ref{Sharp Commutative for Infinite} we will examine how much a Pl\"{u}nnecke
graph originating at a singleton can grow. Pl\"{u}nnecke's
inequality gives $|V_h| \leq |V_1|^h$, but the growth is in fact
far from exponential.
\begin{lemma}\label{Growth m=1}
Let $G$ be an infinite commutative graph where $|V_0|=1$ and
$|V_1|=n$. Then
$$ |V_h| \leq \tbinom{n+h-1}{h}. $$
The bound is best possible.
\end{lemma}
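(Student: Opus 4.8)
The plan is to prove, by induction, the (formally stronger) statement that every commutative graph $G$ of level $h$ with $V_h=\im^{(h)}(V_0)$, $|V_0|=1$ and $|V_1|\le n$ satisfies $|V_h|\le\binom{n+h-1}{h}$. The Lemma follows by replacing the infinite $G$ with the subgraph of all directed paths from $V_0$ to the layer $V_h$, which is commutative (property (1)), has the same $V_0$ and $V_h$, and has $|V_1|\le n$. I would induct on $h+n$; the cases $h=0$ (where $|V_h|=|V_0|=1=\binom{n-1}{0}$) and $V_1=\varnothing$ (where $V_h=\varnothing$ for $h\ge 1$) are immediate. The sharpness claim I would settle by the independent addition graph $G_+(\{0\},\{\gamma_1,\dots,\gamma_n\})$: its $h$-th layer is $hB=\{c_1\gamma_1+\dots+c_n\gamma_n : c_i\in\mathbb Z_{\ge 0},\ \sum_i c_i=h\}$, whose elements are pairwise distinct because the $\gamma_i$ freely generate a commutative group, so $|V_h|=\binom{n+h-1}{h}$ by stars and bars.

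For the inductive step let $h\ge 1$, write $V_0=\{*\}$, and fix any $b\in V_1$. The idea is to split $V_h=A\sqcup B$ according to reachability from $b$: $A=\im^{(h-1)}(b)$ and $B=V_h\setminus A$. The set $A$ is the top layer of the channel from $b$ to $V_h$, a commutative graph (property (1)) of level $h-1$ whose bottom layer is $\{b\}$ and whose layer directly above it has size at most $d^+(b)\le d^+(*)=|V_1|\le n$ (the middle step is property (2)). Since $(h-1)+n<h+n$, the inductive hypothesis gives $|A|\le\binom{n+h-2}{h-1}$.

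The bound $|B|\le\binom{n+h-2}{h}$ is where the one real idea enters, and it is the step I expect to be the main obstacle: one should delete from $G$ not just $b$ but the whole set $R$ of vertices reachable from $b$, and observe that deleting an upward closed set of vertices from a commutative graph always produces a commutative graph. To verify the latter, let $R$ be upward closed, $G'=G\setminus R$ the induced subgraph, and $uw$ an edge of $G'$. Every in-neighbour in $G$ of $u$ (resp. $w$) lies outside $R$ — otherwise $u$ (resp. $w$) would lie in $R$ — so $\im^{-1}$ is unchanged on $\{u,w\}$ and the matching for Pl\"unnecke's downward condition at $uw$ is inherited directly; and if $\phi:\im(w)\hookrightarrow\im(u)$ is a matching in $G$ then $y\notin R$ forces $\phi(y)\notin R$ (else $y\in\im(\phi(y))\subseteq R$), so $\phi$ restricts to a matching $\im_{G'}(w)=\im(w)\setminus R\hookrightarrow\im(u)\setminus R=\im_{G'}(u)$, which is Pl\"unnecke's upward condition. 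Taking $R$ to be the set reachable from $b$ (which is upward closed), $G'$ is commutative of level $h$, has $|V_0(G')|=1$, has $|V_1(G')|=|V_1|-1\le n-1$ (only $b$ among $V_1$ is reachable from $b$), and satisfies $V_h(G')=V_h\setminus R=B=\im^{(h)}_{G'}(V_0(G'))$ — the last because a directed path from $V_0$ to a vertex outside $R$ never meets $R$. As $h+(n-1)<h+n$, the inductive hypothesis yields $|B|\le\binom{n+h-2}{h}$. Finally $|V_h|=|A|+|B|\le\binom{n+h-2}{h-1}+\binom{n+h-2}{h}=\binom{n+h-1}{h}$ by Pascal's identity, completing the induction. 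The remaining checks — that the auxiliary graphs really do satisfy $V_h=\im^{(h)}(V_0)$ and $|V_0|=1$ — are routine, and I anticipate no difficulty with them.
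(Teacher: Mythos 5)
Your proof follows essentially the same route as the paper: the same decomposition of $V_h$ according to reachability from a chosen first-layer vertex, the same recursion $A(n,h)\le A(n,h-1)+A(n-1,h)$ closed off by Pascal's identity, and the same extremal example (the independent addition graph). The one mechanical difference is how you see that the part of the graph feeding $B$ is commutative with first layer of size at most $n-1$: the paper just takes the channel between $V_0$ and $V_h\setminus\im^{(h-1)}(b)$ and invokes property (1), whereas you prove that deleting an upward-closed vertex set preserves Pl\"unnecke's conditions; your verification of that auxiliary fact is correct and is a perfectly serviceable substitute (indeed slightly more general, since the induced graph may contain vertices not lying on any $V_0$--$B$ path, which your strengthened statement tolerates).

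One step needs repair as written. You fix \emph{any} $b\in V_1$ and assert $d^+(b)\le d^+(*)=|V_1|\le n$, citing property (2); but property (2) applies only when $*b\in E(G)$, and your strengthened hypothesis keeps only $|V_1|\le n$, not $V_1=\im(V_0)$. If $b\notin\im(*)$ both relations can fail. Concretely, at level two take $\im(*)=\{a_1,a_2,a_3\}$ with the $a_i$ and their images forming the first two layers of the independent addition graph on three generators (so $|V_2|=6$), and add one extra vertex $b\in V_1$ with $\im^{-1}(b)=\emptyset$ joined to all six vertices of $V_2$: every Pl\"unnecke condition is vacuous or inherited, $V_2=\im^{(2)}(*)$, yet $d^+(b)=6>4=|V_1|$, and the channel from this $b$ then has second layer of size $6>\binom{n+h-2}{h-1}=4$, so the appeal to the inductive hypothesis for $A$ breaks for that choice of $b$. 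The fix is one line: choose $b\in\im(*)$ (if $\im(*)=\emptyset$ then $V_h=\emptyset$ and there is nothing to prove), or equivalently carry $V_1=\im(V_0)$ as part of the induction hypothesis --- your initial reduction to the channel of $V_0$ provides it, and your deletion step preserves it. The paper's own proof makes the same tacit assumptions (that every vertex of $V_h$ is reachable from $V_0$ and that the chosen $v_1$ is a neighbour of the root), so with this adjustment your argument is complete and matches the paper's.
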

\begin{proof}
We perform a double induction on $n$ and $h$. Let $A(n,h)$
be the maximum of $|V_h|$ taken over all commutative graphs with
$|V_0|=1$ and $|V_1|=n$. Take such a $G$ with $V_0=\{u\}$ and
$V_1=\{v_1,\dots,v_n\}$. Any element of $V_h$ can either be
reached from a path passing from $v_1$ or exclusively via paths
that pass from $\{v_2,\dots,v_n\}$. In the former case the vertex
lies in the commutative graph consisting of all paths that start
in $v_1$. By Pl\"{u}nnecke's upward condition the second layer of this
graph has at most $n$ elements and so there are at most $A(n,h-1)$
such vertices in $V_h$. In the later case the vertex lies in the
commutative graph consisting of all paths that start in $u$ and
end in $V_h \backslash \im^{(h)}(v_1)$. The second layer of this
graph is a subset of $\{v_2,\dots, v_n\}$ and hence there are at
most $A(n-1,h)$ such vertices in $V_h$. We have therefore proved
that $$ A(n,h) \leq A(n,h-1) + A(n-1,h).$$
It follows from Pl\"{u}nnecke's condition that $A(1,h)=1 = {h
\choose h}$ for all $h$ and we know that $A(n,1)=n= {n \choose
1}$. The stated bound follows inductively from the well known
identity ${a\choose b} = {a-1 \choose b} + {a-1 \choose b-1}$ and
is attained when $G$ is an independent addition graph on $n$
generators.
\end{proof}
Deducing Theorem \ref{Sharp Commutative for Infinite} is straightforward.
\begin{proof}[Proof of Theorem \ref{Sharp Commutative for Infinite}]
A path is an infinite commutative graph whose magnification ratios are all equal to one. 

Let $1\neq C\in \mathbb{Q}^+$ and $G$ be a commutative graph where $D_i(G)=C^i$ for all $i$. We have to show that $G$ is finite. 

When $C<1$ we let $V_0$ be the bottom layer of $G$. The definition of magnification ratios implies that there exists $\emptyset\neq Z_i \subseteq V_0$ such that $|\im^{(h)}(Z_i)|=C^i |Z_i|$. The quantity $C^i |Z_i|$ is a non-zero integer less than $C^i |V_0|$ and so $i$ cannot be arbitrarily large. When $C>1$ we let $V_1$ be the second layer of $G$. Lemma \ref{Growth m=1} implies that $$D_i(G) \leq \binom{|V_1|+i-1}{i} = O( i^{|V_1|}),$$ which for large enough $i$ is less than $C^i$. 
\end{proof}

\section[Inverse Theorem for Pl\"{u}nnecke's Inequality]{Inverse Theorem for Pl\"{u}nnecke's Inequality}
\label{Inverse}

\vspace{1ex}

We conclude the paper by establishing a necessary condition for Pl\"unnecke's inequality to be attained. Let us first recall a definition from Section \ref{Commutative Graphs}.
\begin{definition}
Let $Z\subseteq V_0$. The \textit{channel} of $Z$ is the commutative subgraph which consists of all paths of maximum length that start in $Z$.
\end{definition}
We use some of the results in Section \ref{Proof} to prove an inverse result for Theorem \ref{Plunnecke}. 
\begin{theorem}\label{Inverse Theorem for Plunnecke Inequality}
Let $C\in \mathbb{Q}^+$ and $G$ be a commutative graph with
$D_i(G)=C^i$ for all $i$. Then exists $\emptyset\neq Z\subseteq V_0$ whose channel is a regular commutative graph of ratio $C$.
\end{theorem}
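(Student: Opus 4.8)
The plan is to track the equality case through the machinery of Section~\ref{Proof}. Start from the weighted version of $G$ with $w(v)=C^{-i}$ for $v\in V_i$. By Corollary~\ref{Min Weight}, every minimal separating set has weight $|V_0|$, and $V_0$ is one such set. The crucial point will be to pick a minimal separating set that is as ``low'' as possible and simultaneously as ``thin'' as possible: concretely, choose $\emptyset\neq Z\subseteq V_0$ attaining $|\im^{(h)}(Z)|=C^h|Z|$ (such $Z$ exists since $D_h(G)=C^h$), and work inside the channel $H$ of $Z$, which is itself a commutative graph by property~(1). In $H$ the set $Z$ is a separating set of minimal weight, and by the proof of Lemma~\ref{Pull Down} we can pull it down and also up, so that at \emph{every} level $j$ the layer $W_j$ of $H$ is a separating set of minimal weight, i.e.\ $w(W_j)=|Z|$, which forces $|W_j|=C^j|Z|$ for all $j$. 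So the channel of $Z$ already has the right layer sizes; what remains is to show it is \emph{regular}.

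For regularity I would look at consecutive triples of layers $W_{j-1}\cup W_j\cup W_{j+1}$ inside $H$. Since each $W_j$ is a minimal-weight separating set, the elementary observation from Section~\ref{Separating Sets} gives $|\im(S)|\geq C|S|$ and $|\im^{-1}(S)|\geq C^{-1}|S|$ for every $S\subseteq W_j$; thus the hypotheses of Lemma~\ref{Technical Three Layer Pull Down} hold for the level-two graph $W_{j-1}\cup W_j\cup W_{j+1}$. That lemma yields $C|X_i|=|Y_i|$ and $C^{-1}|X_i'|=|Y_i'|$ in the notation there. The idea now is to run a global edge-count: summing $|E(W_{j-1},W_j)|\le C^{-1}|E(W_j,W_{j+1})|$ (which is exactly the inequality proved inside Lemma~\ref{Technical Three Layer Pull Down}) over all $j$, and using that the chain closes up because $|E(W_{j-1},W_j)|$ appears on both sides for interior $j$, forces equality $|E(W_{j-1},W_j)|=C^{-1}|E(W_j,W_{j+1})|$ at every level. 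Feeding this back, the partition argument of Lemma~\ref{Technical Three Layer Pull Down} collapses: all the vertices in a given layer must have the same incoming degree and the same outgoing degree. Calling these common values $d_j^-$ and $d_j^+$, the two-way edge count between consecutive layers together with $|W_j|=C^j|Z|$ gives $d_j^+=C\,d_{j+1}^-$ for all $j$; combined with the downward/upward Pl\"unnecke inequalities $d^+(u)\ge d^+(v)$, $d^-(u)\le d^-(v)$ along an edge (property~(2)), one deduces that $d_j^-$ is constant in $j$ and $d_j^+=C\,d_j^-$, i.e.\ $H$ is an $R_C$.

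I expect the main obstacle to be the last sentence of the second paragraph above: showing that equality in the aggregated edge-count propagates to \emph{uniform} vertex degrees within each layer, rather than merely to the weaker conclusions $C|X_i|=|Y_i|$ of Lemma~\ref{Technical Three Layer Pull Down}. The right tool is Lemma~\ref{Technical Three Layer Pull Down Prime}, which was introduced precisely ``for use in Section~\ref{Inverse}'': once we know $C|E(W_{j-1},W_j)|=|E(W_j,W_{j+1})|$ (after relabelling a triple of layers as $U_0\cup U_1\cup U_2$ in the appropriate orientation), that lemma — applied to this triple and to its inverse — pins down $|U_1|=C|U_0|$ and, by re-examining which of the inequalities~\eqref{Partial Sum Prime} can be strict, forces the partition classes $X_i'$ to be trivial except for a single value of $i$, which is exactly uniformity of the outgoing degree; the same applied to the inverse channel gives uniformity of the incoming degree. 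One should also double-check the base of the pull-down induction and the fact that pulling down in $H$ does not shrink $Z$ (it does not, because $Z$ is the bottom layer of its own channel), and that the resulting regular graph really is the channel of $Z$ and not a proper subgraph — but these are routine once the degree-uniformity step is in place.
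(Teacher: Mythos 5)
Your outline correctly identifies the right arena (the channel $H$ of a suitable $Z\subseteq V_0$, the technical lemmas of Section~\ref{Separating Sets}, and an edge-counting argument), but there is a genuine gap, and it is precisely where you anticipated trouble: passing from $C|X_i|=|Y_i|$ to \emph{uniform} degrees. Your proposed remedy does not work, and the reason traces back to your choice of $Z$. You pick $Z$ attaining $|\im^{(h)}(Z)|=C^h|Z|$; the paper instead picks $Z$ of \emph{minimal size} subject to $|\im(Z)|=C|Z|$. That minimality is the engine of the whole proof and you never invoke it. Two concrete consequences:

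First, your claim that ``at every level $j$ the layer $W_j$ of $H$ is a separating set of minimal weight'' is not a consequence of Lemma~\ref{Pull Down}. Pulling a minimal separating set down to $V_0\cup V_h$ is not the same as showing every intermediate layer has minimal weight. Your choice of $Z$ only guarantees that $U_0$ and $U_h$ are minimal-weight separating sets; for the intermediate layers one needs $|U_i|=C^i|U_0|$, which is Lemma~\ref{Cardinality of Z_i} of the paper and whose proof uses the minimality of $Z$ (together with $|U_1|=C|U_0|$, which you also do not have a priori).

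Second, and more seriously, the degree-uniformity step fails as described. Lemma~\ref{Technical Three Layer Pull Down} already yields the equality $|E(U_0,U_1)|=C^{-1}|E(U_1,U_2)|$, so summing over layers ``to force equality'' adds nothing. And equality in the partial-sum inequalities~\eqref{Partial Sum Prime} gives only $Ct_i'=x_i'$ for each $i$; it does not force the partition $X_1',\dots,X_{k'}'$ to concentrate on a single index, i.e.\ it does not give uniform outgoing degree. One can easily have $Ct_i'=x_i'$ with several nonempty classes. In the paper the uniformity is extracted from the minimality of $Z$: if $Y'_{k'}\neq U_0$, then $\bigcup_{i<k'}Y'_i$ is a strictly smaller nonempty set with $|\im(\cdot)|=C|\cdot|$, contradicting minimality; and a second, more delicate application of the same principle (via the sub-channel $R$ between $Z$ and $Y_j$ and Lemma~\ref{Technical Three Layer Pull Down Prime}) gives $X_k=U_1$. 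Without a minimal $Z$ there is simply no contradiction to derive, and the argument stalls exactly where you flagged it might.
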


\subsection{Inverse theorem for Pl\"{u}nnecke's inequality}

\label{Inverse Plunnecke}

The first step in proving Theorem \ref{Inverse Theorem for
Plunnecke Inequality} is to identify $Z$. It turns out that
choosing the smallest non-empty subset of $V_0$ that has a chance of working will do. We
will later need the cardinalities of the various layers of the
channel of such a $Z$.

\begin{lemma}\label{Cardinality of Z_i}
Let $C\in \mathbb{Q}^+$ and suppose $G$ is a commutative graph with
$D_i(G)=C^i$ for all $1\leq i \leq h$. Let $\emptyset\neq Z\subseteq V_0$ be of
minimal size subject to $|\im(Z)| = C |Z|$ and $H$ be the channel of $Z$ with vertex set $U_0\cup\dots\cup U_h$. Then
$|U_i|=C^i|U_0|$ for all $1\leq i \leq h$.
\end{lemma}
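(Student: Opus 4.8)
The plan is to prove the two inequalities $|U_i|\geq C^i|U_0|$ and $|U_i|\leq C^i|U_0|$ separately. The first is easy: by Theorem \ref{Plunnecke} applied to the channel $H$ (which is a commutative graph in its own right by property (1)), together with the fact that $\im^{(i)}_H(U_0)=U_i$ by definition of the channel, we get $|U_i|=|\im^{(i)}_H(U_0)|\geq D_i(H)|U_0|$; and since the channel of $Z$ in $G$ realizes the same images, $D_i$ of the channel is at least $C^i$ — indeed $|\im^{(i)}_H(Z')|\geq C^i|Z'|$ for every $Z'\subseteq U_0$ because these are also images in $G$. So $|U_i|\geq C^i|U_0|$.

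For the reverse inequality I would exploit the minimality of $Z$. First I would establish the base case $|U_1|=C|U_0|$: by construction $|U_1|=|\im(Z)|=C|Z|=C|U_0|$, which is exactly the defining property of $Z$. The real work is the inductive step: assuming $|U_j|=C^j|U_0|$ for all $j\leq i$, show $|U_{i+1}|=C^{i+1}|U_0|$. Suppose for contradiction that $|U_{i+1}|>C^{i+1}|U_0|=C\,|U_i|$. The idea is to pass to the two-layer subgraph on $U_i\cup U_{i+1}$ (plus $U_{i-1}$ to apply the technical lemmas) and derive a violation of minimality. More precisely, I expect one wants a proper nonempty subset $Z''\subseteq Z$ with $|\im(Z'')|=C|Z''|$, contradicting minimality of $Z$. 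To find it, note that if $|U_{i+1}|>C|U_i|$ strictly, then there is slack: some subset $W\subseteq U_i$ has $|\im(W)|>C|W|$ is automatic, but we need the opposite direction — we need a subset of $U_0$ whose $i$-th image is smaller than expected. The mechanism: run the "pull down" machinery of Section \ref{Separating Sets} on the weighted channel with weight $C^{-i}$ on $U_i$. Since $D_i(H)=C^i$ exactly (by the already-proved lower bound and the hypothesis $D_i(G)=C^i$ giving an upper bound via some $Z$), Corollary \ref{Min Weight} says every minimal separating set in $H$ has weight $|U_0|$; in particular $U_0$ and $U_h$ are minimal separating sets, so all the "pulled down" separating sets $S_0\cup S_h$ satisfy $|S_h|=C^h|S_0^c|$ forcing equality $|\im^{(h)}(S_0^c)|=C^h|S_0^c|$ throughout, which by the minimality of $Z$ and a descent argument forces $S_0^c\in\{\emptyset, U_0\}$ or more refined structure — this is where I would extract that each intermediate layer has the exact size.

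The cleanest route is probably: apply Lemma \ref{Technical Three Layer Pull Down Prime} (the variant proved precisely for this section) to the level-two channel $U_{i-1}\cup U_i\cup U_{i+1}$, after verifying its two hypotheses. The hypothesis $|\im^{-1}(S)|\geq C^{-1}|S|$ for $S\subseteq U_i$ follows because $S^c\cup\im^{-1}(S)$-type sets relate to separating sets and we already control the magnification ratio $D_{i}$ of the sub-channel; the hypothesis $C\,|E(U_{i-1},U_i)|=|E(U_i,U_{i+1})|$ follows from a global edge-count using the inductively known layer sizes and the fact that equality holds in Plünnecke's inequality at every level (so the edge-count identity of Lemma \ref{Technical Three Layer Pull Down}, which holds whenever the relevant images are tight, propagates). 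Then Lemma \ref{Technical Three Layer Pull Down Prime} gives $|U_i|=C|U_{i-1}|$, which combined with the induction hypothesis $|U_{i-1}|=C^{i-1}|U_0|$ yields $|U_i|=C^i|U_0|$; shifting the index by one and using the top layer handles $|U_{i+1}|$.

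The main obstacle I anticipate is verifying the edge-count hypothesis $C\,|E(U_{i-1},U_i)|=|E(U_i,U_{i+1})|$ of the technical lemma cleanly — i.e. showing that the equality forced by Corollary \ref{Min Weight} on the endpoints $U_0, U_h$ of the channel actually percolates down to an exact balance between each consecutive pair of edge-sets. This should follow by running the pull-down proof and observing that every inequality in the chain $|E(U_0,U_1)|\leq C^{-1}|E(U_1,U_2)|\leq\cdots$ must be an equality when $D_h(H)=C^h$ is attained with the minimal $Z$, but making this precise — and in particular making sure the minimality of $Z$ is used to prevent any layer from being strictly larger — will be the delicate point.
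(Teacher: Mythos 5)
Your lower bound is correct (though invoking Theorem~\ref{Plunnecke} there is superfluous: $|U_i|=|\im^{(i)}(Z)|\geq D_i(G)|Z|=C^i|U_0|$ is immediate, since images of subsets of $Z$ agree in $H$ and $G$). The upper bound, however, has a real gap, which you yourself flag: the edge-count hypothesis $C\,|E(U_{i-1},U_i)|=|E(U_i,U_{i+1})|$ of Lemma~\ref{Technical Three Layer Pull Down Prime} is not available at this stage and you do not supply it. In the paper that identity is established only inside the proof of Theorem~\ref{Inverse Theorem for Plunnecke Inequality}, which takes the present lemma as input, so reaching for it here risks circularity. There is also a slip in your justification of $D_i(H)=C^i$: the hypothesis $D_i(G)=C^i$ gives a \emph{lower} bound on $D_i(H)$ (restricting to $Z$-subsets can only increase the minimum), not the upper bound you claim; the upper bound comes from Theorem~\ref{Plunnecke} applied to $H$, namely $D_i(H)\leq D_1(H)^i=C^i$, using that $Z$ itself witnesses $D_1(H)\leq C$.

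The paper's route to the upper bound is direct, with no induction on the layer index, no edge counting, and no use of Lemma~\ref{Technical Three Layer Pull Down Prime}. Fix $i$ and choose $S\subseteq U_0$ attaining $D_i(H)=C^i$, so $|\im^{(i)}(S)|=C^i|S|$. Then $\im(S)^c\cup\im^{(i)}(S)$ (complement in $U_1$) is a separating set of $H$: a maximal path either meets $\im(S)^c$ in layer one, or enters $\im(S)$ and must then pass through $\im^{(i)}(S)$ at level $i$. By Corollary~\ref{Min Weight} its weight in the $C^{-j}$-weighted channel is at least $|U_0|$; expanding, and using $|U_1|=C|U_0|$ together with $|\im^{(i)}(S)|=C^i|S|$, yields $|\im(S)|\leq C|S|$, hence $|\im(S)|=C|S|$. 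Minimality of $Z$ now forces $S=U_0$, giving $|U_i|=|\im^{(i)}(U_0)|=C^i|U_0|$. That single appeal to minimality — identifying the $D_i$-minimizer with all of $U_0$ — is the whole upper-bound argument, in contrast to the contradiction-plus-descent scheme you sketch.
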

\begin{proof}
For any $S\subseteq Z=U_0$ we have that $\im^{(i)}(S) = \im^{(i)}_H(S)$ so we
will drop the subscript. Observe that $D_1(H)=C$. We use this to
show that $D_i(H)=C^i$ for all $i$. Indeed
$$ C^i=D_i(G)\leq D_i(H) \leq D_1^i(H) = C^i$$
the first inequality following from the definition of
magnification ratios, while the second from Theorem \ref{Plunnecke}. Hence $|\im^{(i)}(S)| = C^i |S|$ for some $S\subset
U_0$. $\im(S)^c\cup \im^{(i)}(S)$ is a separating set in the
weighted version of $H$, where as usual $w(v)=C^{-i}$ for all
$v\in U_i$. By Corollary \ref{Min Weight} we know that
\begin{eqnarray*}
|U_0| & \leq & w(\im(S)^c)+w(\im^{(i)}(S)) \\
        & = & C^{-1}(|U_1| - |\im(S)|) +C^{-i}|\im^{(i)}(S)| \\
        & = &  |U_0| - C^{-1}|\im(S)| + |S|.
\end{eqnarray*}
Thus $|\im(S)|\leq C|S|$. The minimality of $Z$ implies that $S=Z=U_0$.
\end{proof}

We proceed with the proof of Theorem \ref{Inverse Theorem for
Plunnecke Inequality}. We will use Lemma \ref{Technical Three
Layer Pull Down} on page \pageref{Technical Three Layer Pull Down}
repeatedly to show that $H$ has to in fact be regular.

\begin{proof}[Proof of Theorem \ref{Inverse Theorem for Plunnecke
Inequality}]
Similarly to above we let $\emptyset\neq Z\subseteq V_0$ be of minimal size
subject to $|\im(Z)|= C |Z|$. Our goal is to prove that its channel $H$ is a regular commutative graph of
ratio $C$. We will not have to work in $G$ any further so to keep
the notation simple we will write $\im$ and $\im^{-1}$ instead of
$\im_H$ and $\im^{-1}_H$. Note however that in general
$\im^{-1}_G\neq \im^{-1}_H$.

We want to apply Lemma \ref{Technical Three Layer Pull Down} so we
let $U_0\cup U_1\dots\cup U_h$ be the vertex set of $H$ with the
usual weights $w(v)=C^{-i}$ for all $v\in U_i$. We partition $U_1$
into $X_1,\dots, X_k$ (where $d^-\rst{X_i}=i$) and
$X^\prime_1,\dots, X^\prime_{k^\prime}$ (where
$d^+\rst{X^\prime_i}=i$). We also partition $U_0$ and $U_2$
respectively into $Y^\prime_1,\dots, Y^\prime_{k^\prime}$ (where
$d^+\rst{Y^\prime_i}=i$) and $Y_1,\dots, Y_k$ (where
$d^-\rst{Y_i}=i$). To check that the condition of Lemma
\ref{Technical Three Layer Pull Down} is satisfied we observe that
$U_1$, which by Lemma \ref{Cardinality of Z_i} has weight $|U_0|$, is by Corollary \ref{Min Weight}
a separating set of minimum weight. For every $S\subseteq U_1$
both $S^c\cup \im(S)$ and $S^c \cup \im^{-1}(S)$ are separating
sets. The minimality of $w(U_1)$ implies that
$$ |\im(S)|\geq C|S|~~~\mathrm{and}~~~ |\im^{-1}(S)|\geq C^{-1}|S|.$$

Our first task will be to establish that $Y^\prime_{k^\prime}=U_0$
and that the outgoing degree in $U_0\cup U_1$ is $k^\prime$.
Suppose not. Then
$$ \bigcup_{i=1}^{k^\prime - 1} Y^\prime_i$$
is both non-empty and not the whole of $U_0$. By the minimality of
$Z$
$$ \left|\bigcup_{i=1}^{k^\prime - 1} X^\prime_i\right| =
\left|\im\left(\bigcup_{i=1}^{k^\prime - 1}
Y^\prime_i\right)\right| >C \left|\bigcup_{i=1}^{k^\prime - 1}
Y^\prime_i\right|. $$ On the other hand by Lemma \ref{Technical
Three Layer Pull Down} we know that
$$ \left|\bigcup_{i=1}^{k^\prime - 1} X^\prime_i\right| =
\sum_{i=1}^{k^\prime-1}|X^\prime_i| = C
\sum_{i=1}^{k^\prime-1}|Y^\prime_i| =C
\left|\bigcup_{i=1}^{k^\prime - 1} Y^\prime_i\right| .$$ So
$Y^\prime_{k^\prime}=U_0$ and by Lemma \ref{Technical Three Layer
Pull Down} $|X^\prime_{k^\prime}| = C |Y^\prime_{k^\prime}|=
|U_1|$, so $X^\prime_{k^\prime}=U_1$ and $d^+\rst{U_0\cup
U_1}=k^\prime$.

Next we establish that $X_{k}=U_1$ and that the incoming degree in
$U_1\cup U_2$ is $k$. Let $j$ be minimal subject to $Y_j\neq
\emptyset$. Let $R$ be the channel between $Z=U_0$ and $Y_j$.

We observe that $\im^{-1}(Y_j)=X_j$. This holds as by
Pl\"unnecke's downward condition $$\im^{(-1)}(Y_j)\subseteq
\bigcup_{i=1}^jX_i.$$ The choice of $j$ implies that 
$Y_i=\emptyset$, for $i<j$. By Lemma \ref{Technical Three Layer Pull Down}
we have $|X_i|=C^{-1}|Y_i|=0$ for all $i<j$. Thus
$\im^{(-1)}(Y_j)=X_j$ as claimed.

Thus $R_0=\im^{-1}(X_j)$, $R_1 = X_j$ and $R_2=Y_j$ are the layers
of $R$.
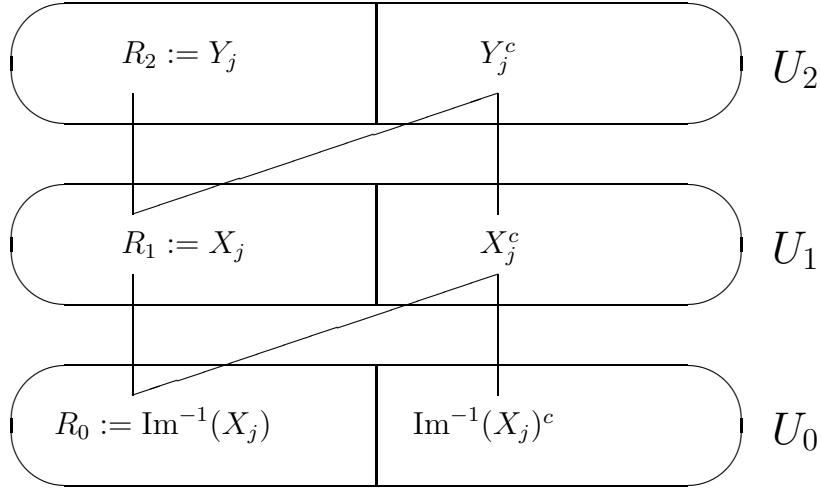
\begin{figure}[h t]
\begin{center}
\setlength{\unitlength}{0.8 cm}
 \begin{picture}(16,10)
 \put(14.5,7.7){\Large{$U_2$}}
 \put(8,8){\oval(12,2)}
 \put(8,7){\line(0,1){2}}
 \put(3.8,8){{$R_2 := Y_j$}}
 \put(9.7,8){{$Y_j^c$}}
 \put(4,5.5){\line(0,1){2}}
 \put(4,5.5){\line(3,1){6}}
 \put(10,5.5){\line(0,1){2}}
 \put(14.5,4.7){\Large{$U_1$}}
 \put(8,5){\oval(12,2)}
 \put(8,4){\line(0,1){2}}
 \put(3.8,4.85){{$R_1 : =X_j$}}
 \put(9.7,4.85){{$X_j^c$}}
 \put(4,4.5){\line(0,-1){2}}
 \put(10,4.5){\line(0,-1){2}}
 \put(10,4.5){\line(-3,-1){6}}
 \put(14.5,1.7){\Large{$U_0$}}
 \put(8,2){\oval(12,2)}
 \put(8,1){\line(0,1){2}}
 \put(2.7,1.85){{$R_0 := \im^{-1}(X_j)$}}
 \put(8.6,1.85){{$\im^{-1}(X_j)^c$}}
 \end{picture}
\end{center}
\caption{\label{Inverse Illustration}\small{An illustration of how
different parts of the graph are connected. Lines may correspond
to multiple or no edges.}}
\end{figure}
We will apply Lemma \ref{Technical Three Layer Pull Down Prime} on
page \pageref{Technical Three Layer Pull Down Prime} to $R$ and so
we need to check that the two conditions are satisfied. We begin
with the second. We have $d^-_R(v)=d^-_H(v)=j$ for all $v\in R$
and by Lemma \ref{Technical Three Layer Pull Down} that
$|R_2|=|Y_j| = C |X_j| = C |R_1|$. Thus
\begin{eqnarray*}
|E(R_1,R_2)| = \sum_{w\in R_2} d^-(w) = |R_2|\, j = C |R_1| \,j =
C |E(R_0,R_1)|.
\end{eqnarray*}
For the first we observe that $\im^{-1}_R(v)=\im^{-1}(v)$ for all
$v\in R$. We have seen above that $U_1$ is a separating set in $H$
of minimum weight and so we have that
$|\im^{-1}_R(S)|=|\im^{-1}(S)|\geq C^{-1} |S|$ for all $S\subseteq
R_1$. We can now apply Lemma \ref{Technical Three Layer Pull Down
Prime} to get:
\begin{eqnarray}\label{Size of Downward Graph}
|R_1| = C |R_0|.
\end{eqnarray}
On the other hand we know that $\im(\im^{-1}(X_j)^c)=X_j^c$ and so
if $R_0=\im^{-1}(X_j) \neq U_0$, the minimality of $Z$ implies
$$|U_1| - |X_j| >  C (|U_0| - |\im^{-1}(X_j)| = |U_1| - C |\im^{-1}(X_j)|$$
i.e. that $C |R_0|> |R_1|$, which contradicts (\ref{Size of
Downward Graph}). We must therefore have $R_0=U_0$. Hence
$|X_j|=|R_1|=C|R_0|=C|U_0|=|U_1|$, i.e. $X_j=U_1=X_k$ and so
$|Y_j|=C|X_j|=|U_2|$, i.e. $Y_j=U_2=Y_k$. In particular
$d^-\rst{U_1\cup U_2}=k$.

We therefore have regularity in the bottom three layers. We must
check that the ratio of $k^\prime$ to $k$ is $C$. This follows
from counting the edges between $U_0$ and $U_1$ in two ways:
$$ k^\prime |U_0|  = |E(U_0,U_1)| = k |U_1| = k C |U_0|.$$
The final step is to establish regularity for the remaining layers
of $G$. We consider any $w\in U_2$. $d^+(w)\leq k^\prime=C k$ and
so $$C |E(U_1,U_2)| = C |U_1| C k =  C k |U_2| \geq
|E(U_2,U_3)|.$$ The fact that $|U_2|=C|U_1|$ follows from by Lemma
\ref{Cardinality of Z_i}. Similarly $d^-(x)\geq k$ for any $x\in
U_3$ and so $$C |E(U_1,U_2)| = C k |U_2| = k |U_3| \leq
|E(U_2,U_3)|.$$ We must therefore have equality in each step and
therefore $d^+(w)= C k$ for all $w\in U_2$ and $d^-(x)=k$ for all
$x\in U_3$. We repeat this step for all remaining layers to finish
off the proof.
\end{proof}

\begin{remark}
An alternative way to prove Theorem \ref{Inverse Theorem for
Plunnecke Inequality} is to first establish the special case when
$C=1$ using Proposition \ref{Ruzsa} and then deduce the general
case by the multiplicativity of magnification ratios and degrees.
This time independent addition graphs cannot work and we need to
use regular commutative graphs.
\end{remark}

Proposition \ref{Ruzsa} gives a sensible looking
necessary and sufficient condition for all magnification ratios of
a commutative graph to equal one.

\begin{corollary}\label{N&SforC=1}
Let $G$ be a commutative graph and $V_0$ be its bottom layer. $D_i(G)=1$ for all $i$ if and only if there exist $|V_0|$ vertex
disjoint paths of maximum length in $G$ and the channel of some $\emptyset\neq Z\subseteq V_0$ is an $R_{1}$.
\end{corollary}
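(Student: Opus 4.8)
The plan is to prove the two implications separately; the forward direction is essentially a repackaging of results already in hand, while the reverse direction needs one short additional observation.

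For the forward direction, suppose $D_i(G)=1$ for all $i$. Since $D_h(G)=1\geq 1$, Proposition \ref{Ruzsa} immediately supplies $|V_0|$ vertex disjoint paths from $V_0$ to $V_h$, and these are paths of maximum length. For the second condition, a regular commutative graph of ratio $1$ is by definition exactly an $R_1$, so applying Theorem \ref{Inverse Theorem for Plunnecke Inequality} with $C=1\in\mathbb{Q}^+$ produces $\emptyset\neq Z\subseteq V_0$ whose channel is an $R_1$. Nothing more is needed.

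For the reverse direction, assume both conditions hold; write $H$ for the channel of the given $Z$ and $U_0\cup\dots\cup U_h$ for its vertex set, with $U_0=Z$. First, the $|V_0|$ vertex disjoint paths of maximum length run from $V_0$ to $V_h$, so exactly one starts at each vertex of $V_0$; restricting the $|W|$ of them that start in a given $\emptyset\neq W\subseteq V_0$ to their initial segments of length $i$ yields $|W|$ distinct vertices of $\im^{(i)}(W)$, whence $D_i(G)\geq 1$ for every $i$ (alternatively one may quote Proposition \ref{Ruzsa}). For the matching upper bound we exploit that $H$ is an $R_1$: by Lemma \ref{regular-sharp} we have $D_i(H)=1$ and $|U_i|=|U_0|=|Z|$ for all $i$, and taking $S=U_0$ gives $|\im^{(i)}_H(U_0)|=|U_i|=|U_0|$. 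Since every path of $H$ is a path of $G$ we have $\im^{(i)}_H(U_0)\subseteq\im^{(i)}_G(U_0)$; granting the reverse inclusion $\im^{(i)}_G(U_0)\subseteq U_i$, we get $|\im^{(i)}_G(Z)|=|Z|$, hence $D_i(G)\leq 1$, and combined with the lower bound $D_i(G)=1$ for all $i$, as required.

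The main obstacle is precisely this inclusion $\im^{(i)}_G(U_0)\subseteq U_i$, i.e.\ the claim that a length-$i$ path in $G$ starting in $Z$ cannot leave the channel $H$. I would argue that such a path, beginning at some $z\in Z$, extends to a path of maximum length beginning at $z$: it cannot be prolonged at the bottom since $z\in V_0$, and prolonging it at the top until it becomes maximal produces a path of maximum length through the given vertex, which therefore lies in $H$; that this maximal extension has length $h$ (so the vertex lands in $U_i$ and not in a lower layer) is forced by $H$ being an $R_1$, since an $R_1$ has no vertex of outgoing degree $0$ outside its top layer. This is the only step that requires work beyond citing earlier results. I would close with a remark that, exactly as for Theorem \ref{Inverse Theorem for Plunnecke Inequality}, the reverse direction can instead be routed through the separating-set machinery of Section \ref{Proof}: with all vertex weights equal to $1$ and $\Delta=1$, Corollary \ref{Min Weight} together with the $|V_0|$ disjoint maximum-length paths pins down the cardinalities of the layers of $H$ and yields the same conclusion.
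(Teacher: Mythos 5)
Your forward direction coincides with the paper's: Proposition \ref{Ruzsa} supplies the disjoint maximum-length paths, and Theorem \ref{Inverse Theorem for Plunnecke Inequality} with $C=1$ supplies the $R_1$ channel.

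The reverse direction is where the problem lies, and in fact it lies both in your argument and in the paper's own. You correctly isolate the inclusion $\im^{(i)}_G(Z)\subseteq U_i$ as the crux, but your argument for it fails: you prolong a length-$i$ path from $z\in Z$ to a maximal path and then argue from the $R_1$-ness of $H$ that this maximal path has length $h$. But the regularity of $H$ constrains the out-degrees only of vertices \emph{of $H$}; the prolongation may exit $H$ and get stuck at a vertex of $G$ not in $H$ with out-degree zero, about which the $R_1$ structure says nothing --- and a vertex $v\in\im^{(i)}_G(Z)\setminus U_i$ is exactly such a point. A concrete instance: $V_0=\{a\}$, $V_1=\{b,c\}$, $V_2=\{d\}$, with edges $ab$, $ac$, $bd$. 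One checks directly that this is a commutative graph; the unique maximum-length path $a,b,d$ gives both $|V_0|=1$ disjoint maximum-length paths and an $R_1$ channel of $Z=\{a\}$, yet $D_1(G)=2$. Here $\im^{(1)}_G(Z)=\{b,c\}\not\subseteq U_1=\{b\}$, so the inclusion fails and with it your argument. The paper's one-line proof invokes Lemma \ref{Cardinality of Z_i} for the same purpose, but that lemma carries $D_i(G)=C^i$ as a standing hypothesis and hence cannot be quoted when $D_i(G)=1$ is precisely what is to be established; your write-up at least makes this unjustified step explicit, which is to your credit, but the step itself does not hold.
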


\begin{proof}
When $D_i(G)=1$ for all $i$ we know from Proposition \ref{Ruzsa}
that there are $|V_0|$ vertex disjoint paths of maximum length in $G$ and just
proved the existence of a suitable non-empty $Z\subseteq V_0$. Conversely
the existence of the vertex disjoint paths of maximum length guarantees that
$D_i(G)\geq 1$ for all $i$ and Lemma \ref{Cardinality of Z_i} guarantees that  $|\im^{(i)}(Z)|=|Z|$ and hence
$D_i(G)\leq 1$.
\end{proof}

Not a whole lot more can be said about the structure of such $G$.
It is clear that the channel of $Z^c$ must
have magnification ratios no smaller than one and that is about
it. For example take any commutative graph $G$ of level two with
vertex set $V_0\cup V_1 \cup V_2$ and $D_i(G)\geq 1$ and any
regular commutative graph $R$ with ratio one and vertex set
$U_0\cup U_1 \cup U_2$. Form a new graph $G^\prime$ of level two
by placing an edge between any $u\in V_i$ and any $v\in U_{i+1}$.
$G^\prime$ has magnification ratios equal to one as
$$|\im_{G^\prime}^{(i)}(S)|= |\im_{G}^{(i)}(S\cap V_0)|+
|\im_{R}^{(i)}(S\cap U_0)|\geq |S\cap V_0|+|S\cap U_0|=|S|$$ and
$$|\im_{G^\prime}^{(i)} (U_0)| = |\im_{R}^{(i)} (U_0)|=|U_0|. $$
$G^\prime$ is furthermore a commutative graph. The way $G$ is
joined to $R$ means that for the upward condition we only need to
worry about elements in $V_0$. Let $uv\in E(V_0,V_1)$. Then
$\im_{G^\prime}(v)= \im_G(v) \cup U_2$ and $\im_{G^\prime}(u) =
\im_G(u) \cup U_1$. We know from Pl\"{u}nnecke's condition that a
matching exists from $\im_G(v)$ to $\im_G(u)$ and from Proposition
\ref{Ruzsa} and Lemma \ref{Cardinality of Z_i} that a matching
exists from $U_2$ to $U_1$. Putting the two matchings together
gives a matching from $\im_{G^\prime}(v)$ to $\im_{G^\prime}(u)$.
Next take $uv\in E(V_0,U_1)$, $\im_{G^\prime}(v) = \im_{R}(v)$ and
we know from Proposition \ref{Ruzsa} that there is a matching from
$\im_R(v)$ to $U_1\subseteq \im_{G^\prime}(u)$ and hence from
$\im_{G^\prime}(v)$ to $\im_{G^\prime}(u)$. Similar considerations
show that the downward condition is satisfied.

\bibliography{all}

$\hspace{12pt}$\textsc{Department of Pure Mathematics and
Mathematical Statistics} \\ \textsc{Wilberforce Road, Cambridge CB3 0WB,
England}

$\hspace{12pt}$ \textit{Email address}: giorgis@cantab.net
\end{document}